\newtheorem{theorem}{Theorem}[section]
\newtheorem{proposition}[theorem]{Proposition}
\newtheorem{lemma}[theorem]{Lemma}
\newtheorem{corollary}[theorem]{Corollary}
\theoremstyle{definition}
\newtheorem{definition}[theorem]{Definition}
\newcounter{smalllist}
\numberwithin{equation}{section}
\newcommand{\lb}{\label}
\newcommand{\supp}{\text{\rm{supp}}}
\newcommand{\beq}{\begin{equation}}
\newcommand{\eeq}{\end{equation}}
\newcommand{\bal}{\begin{align}}
\newcommand{\eal}{\end{align}}
\newcommand{\bals}{\begin{align*}}
\newcommand{\eals}{\end{align*}}
\newcommand{\bbN}{{\mathbb{N}}}
\newcommand{\bbR}{{\mathbb{R}}}
\newcommand{\bbP}{{\mathbb{P}}}
\newcommand{\bbE}{{\mathbb{E}}}
\newcommand{\bbZ}{{\mathbb{Z}}}
\newcommand{\bbS}{{\mathbb{S}}}
\newcommand{\calE}{{\mathcal E}}
\newcommand{\calF}{{\mathcal F}}
\newcommand{\calH}{{\mathcal H}}
\newcommand{\eps}{\varepsilon}
\newcommand{\al}{\alpha}
\newcommand{\be}{\beta}
\begin{document}
\title[Quantitative Homogenization for Combustion in Random Media]
{Quantitative Homogenization for \\ Combustion in Random Media}

\author{Yuming Paul Zhang and Andrej Zlato\v s}

\address{\noindent Department of Mathematics \\ University of
California San Diego \\ La Jolla, CA 92093 \newline Email: \tt
zlatos@ucsd.edu}

\address{\noindent Department of Mathematics \\ University of
California San Diego \\ La Jolla, CA 92093 \newline Email: \tt
yzhangpaul@ucsd.edu}


\begin{abstract} 
We obtain the first quantitative stochastic homogenization result for reaction-diffusion equations, for ignition reactions in dimensions $d\le 3$ that either have finite ranges of dependence or are close enough to such reactions, and for solutions with initial data that approximate characteristic functions of general convex sets.  We show algebraic rate of convergence of these solutions to their homogenized limits, which are (discontinuous) viscosity solutions of certain related Hamilton-Jacobi equations.
\end{abstract}

\maketitle

\section{Introduction} \lb{S1}

A basic model of combustion processes in random media is the reaction-diffusion equation
\beq\lb{1.1}
u_t=\Delta u+f(x,u,\omega)
\eeq
with  $(t,x)\in (0,\infty)\times \bbR^d$ and
 $\omega$ an element of some probability space $(\Omega,\calF,\bbP)$.
Its solutions $u$ represent normalized temperature of the combusting medium, taking values between 0 and 1, and the
reaction function $f$ is of the {\it ignition type}, satisfying $f(\cdot, u,\cdot) \equiv 0$ whenever $u\in [0, \theta_1]\cup\{1\}$, for some $\theta_1\in(0,1)$. 

This model, with homogeneous reactions $f(x,u,\omega)=f(u)$ goes back to pioneering works by Kolmogorov, Petrovskii, and Piskunov \cite{KPP}, and Fisher \cite{Fisher}. In this case it is well known that solutions to \eqref{1.1} propagate ballistically in all directions at a constant speed $c^*$ in the sense that a solution with initial data  close to the characteristic function of some (not too small) set $A\subseteq \bbR^d$ is in a sense close to the characteristic function of the set $A+c^*t B_1(0)$ at any large time $t>0$.  We refer to \cite{aronson1975,aronson1978} for various results in the homogeneous reaction case, and to the reviews  \cite{47souganidis,xin2000, Berrev} for other related developments and references.

%

The setting of heterogeneous reactions is much more complicated as one cannot expect the same propagation speed in all directions --- or indeed any propagation speed at all --- for general $f$.  However, when an environment is random, and sufficiently so (e.g., when $f$ is i.i.d.~in space in some sense or, more generally, stationary ergodic), large space-time  scale dynamics of physical processes occurring inside it frequently appear  as if the environment were homogeneous (albeit non-isotropic).  This phenomenon, called {\it homogenization}, is a result of large-scale averaging of the random heterogeneities in the medium, and in the setting of \eqref{1.1} would also mean existence of direction-dependent asymptotic propagation speeds of solutions.

While existence of homogenization has long been known in various settings, in particular for (first-order as well as ``viscous'' second-order) Hamilton-Jacobi equations (the literature is vast; the reader can consult \cite{LioSou,armstrong2015,LinZla,LPV,LioSouPer, sou1999,RezTar,kosy} and references therein), until recently it has been proved for reaction-diffusion equations only in one spatial dimension $d=1$, even in the simplest heterogeneous setting of spatially periodic reactions $f$.  The main reason for this is that in the case of reaction-diffusion equations, the (homogenized) large-space-time limits of  solutions to \eqref{1.1} are in fact expected to be (discontinuous) characteristic functions of time-expanding regions, which are also (viscosity) solutions to a very different PDE, the (first-order) Hamilton-Jacobi equation \eqref{1.5} below with some $f$-dependent ``speed'' $c^*:\bbS^{d-1}\to(0,\infty)$.  When this fact is coupled with complications caused by potentially very non-trivial geometries of the boundaries of these regions in dimensions $d\ge 2$, it is not surprising that the question of homogenization in this setting becomes substantially more challenging.

In fact, the first proofs of stochastic homogenization for \eqref{1.1} in dimensions $d\ge 2$ have only been provided recently and only for ignition reactions (we also note that a homogenization result for {\it KPP reactions}, satisfying $f(\cdot,0,\cdot)\equiv f(\cdot,1,\cdot)\equiv 0$, and $0<f(x,u,\omega)\le f_u(x,0,\omega)u$ when $u\in(0,1)$, was stated without proof in the paper \cite{LioSou} by Lions and Souganidis).  First, Lin and the second author obtained a number of conditional homogenization results for general reactions, and showed that the hypotheses in these apply, in particular, to isotropic stationary ergodic ignition reactions in dimensions $d\le 3$ \cite{LinZla}.   We then showed that homogenization also holds for general stationary ignition reactions in dimensions $d\le 3$ that either have finite ranges of dependence (which is a continuous version of an i.i.d.~environment) or are in some sense close to such reactions \cite{ZhaZla} (we refer to \cite{ZlaInhomog, LinZla, ZhaZla} for further discussion on this, including the reason for the not-just-technical and physically relevant limitation to $d\le 3$, which we also briefly mention after Definition \ref{D.2} below).  The hypotheses {\bf (H1)--(H4)} below in fact mirror those from  \cite{ZhaZla}, although for the sake of simplicity we will not consider  here the most general form of the hypotheses in \cite{ZhaZla}.

We also note that when it comes to {\it periodic}  reactions (i.e., $f(x,u,\omega)=f(x,u)$ and periodic in $x$), homogenization was proved for {\it monostable} ones (as KPP but without requiring $f(x,u)\le f_u(x,0)u$, so  KPP reactions are included) by Alfaro and Giletti \cite{AlfGil} for initial data with smooth convex supports.  This was extended to general convex supports in \cite{LinZla}, where  homogenization was also proved for periodic ignition reactions and quite general initial data in any dimension. We also refer to the work \cite{MajSou} by Majda and Souganidis for the case of \eqref{1.1} with homogeneous KPP reactions and periodic first-order advection terms.

Given how recent the above results are, it is no surprise that until now no {\it quantitative estimates} on the speed of convergence of solutions to \eqref{1.1} to their homogenized limits have been obtained.  The goal of this paper is to address this question for the random ignition reactions considered in \cite{ZhaZla} (see Theorem \ref{T.1.1} below).  This involves the study of the large-space-time-scale version of \eqref{1.1}, that is,
\beq\lb{1.4}
( u_\eps)_t=\eps\Delta u_\eps+{\eps}^{-1}f\left({\eps}^{-1}{x}, u_\eps,\omega\right)
\eeq
with a small $\eps>0$, so that solutions $u$ to \eqref{1.1} give rise to those for \eqref{1.4} via
\beq\lb{1.3}
u_\eps(t,x,\omega):=u\left({\eps}^{-1}{t},{\eps}^{-1}{x},\omega\right).
\eeq
Our main result in \cite{ZhaZla} is  that if initial data for \eqref{1.4} sufficiently well approximate the characteristic function of some open set $A\subseteq\bbR^d$ as $\eps\to 0$, then the solutions $u_\eps$ almost surely converge  to the characteristic function of a set $\Theta^{A,c^*}\subseteq (0,\infty)\times\bbR^d$, in the sense of locally uniform convergence on the complement of $\partial \Theta^{A,c^*}$ (i.e., where this characteristic function is continuous).  In fact, as is shown in \cite{LinZla}, $\chi_{\Theta^{A,c^*}}$ is a viscosity solution with initial data $\chi_A$ to the 
 deterministic homogeneous (non-isotropic) Hamilton-Jacobi equation 
 \beq\lb{1.5}
\bar{u}_t=c^*\left(-\frac{\nabla\bar{u}}{|\nabla \bar{u}|}\right)|\nabla \bar{u}|,
\eeq
where $c^*(e)>0$ is a (deterministic asymptotic) {\it front speed} for \eqref{1.1} in direction $e\in\bbS^{d-1}$, which  exists for each $e$ and the function $c^*:\bbS^{d-1}\to(0,\infty)$ is Lipschitz \cite{ZhaZla}.  

One can therefore view \eqref{1.5} as the homogenization limit of \eqref{1.4}.  We then show in Theorem \ref{T.1.1} below  that when the initial set $A$ is bounded and convex, then convergence to this limit is algebraic in $\eps$ (with some power $\sigma>0$), with a probability that exponentially converges to 1 as $\eps\to 0$.  Specifically, we refer here to convergence of the $\theta$-super-level set
\[
\Gamma_{u_\eps,\theta}(t,\omega):=\left\{x\in\bbR^d\,|\,u_\eps(t,x,\omega)\geq\theta\right\}
\]
 of  $u_\eps(\cdot,t,\omega)$ to $\Theta^{A,c^*}(t):=\{x\in\bbR^d\,|\, (t,x)\in \Theta^{A,c^*}\}$, for each fixed $\theta\in(0,1)$ and uniformly on bounded time intervals. 
  We also note that in this convex $A$ case, the set
$\Theta^{A, c^*}(t)$ is also convex and was in fact shown in \cite[Theorem 1.4(iii)]{LinZla} to have the relatively simple form 
\beq\lb{6.2}
\Theta^{A, c^*}(t)= \bigcap_{e\in\bbS^{d-1}} \left\{ x\in \bbR^d\,\bigg|\, x\cdot e < \sup_{y\in A} y\cdot e+c^*(e)t\,\right\}.
\eeq

Theorem \ref{T.1.1} is hence a {\it quantitative stochastic homogenization} result for \eqref{1.1}, which is to the best of our knowledge the first one for reaction-diffusion equations.
The basis of our analysis are results from our paper \cite{ZhaZla}, primarily those in Proposition \ref{P.3.1} below.  These are quantitative estimates on the fluctuations of arrival times at any point in $\bbR^d$ of special solutions to \eqref{1.1} with half-space-like initial data, and were obtained via a method inspired by related pioneering results of Armstrong and Cardaliaguet \cite{armstrong2015} for Hamilton-Jacobi equations with non-convex finite-range-of-dependence Hamiltonians.  We note that in the case of Hamilton-Jacobi homogenization, the limiting PDE is again a Hamilton-Jacobi equation; this differs from our reaction-diffusion case, where the homogenization limit of \eqref{1.4} is  \eqref{1.5} (see  \cite{LinZla} for further discussion concerning this relationship).  


We note that while we could prove our results in more generality, in particular, include in Theorem \ref{T.1.1} also reactions that are less well approximated by those with finite ranges of dependence (see in particular hypothesis {\bf (H4')} and Example 1.6 in \cite{ZhaZla}), we chose not to do so here for the sake of clarity.

Let us now move to the precise statements of our hypotheses, which are from \cite{ZhaZla}, and to our main result.  We start with the definition of stationary reactions.

\begin{definition}\lb{D.1}
Let $(\Omega,\calF,\bbP)$ be a probability space that is endowed with a group of measure-preserving bijections $\{{\Upsilon_y:\Omega\to\Omega}\}_{y\in\bbR^d}$ such that for all $y,z\in \bbR^d$,
\[
\Upsilon_y\circ\Upsilon_z=\Upsilon_{y+z}.
\]
A reaction function $f:\bbR^d\times [0,1]\times\Omega\to [0,\infty)$, with the random variables $X_{x,u} :=f(x,u,\cdot)$ being $\calF$-measurable for all $(x,u)\in \bbR^d\times [0,1]$, is called \textit{stationary} if for each $(x,y,u,\omega)\in \bbR^{2d}\times [0,1]\times\Omega$ we have
\[
f(x,u,\Upsilon_y\omega) = f(x+y,u,\omega).
\] 
The \textit{range of dependence} of such  $f$ is the infimum of all $r\in\bbR^+\cup\{\infty\}$ such that 
\[
\calE(U)\text{ and }\calE(V) \text{ are $\bbP$-independent}
\]
for any $U,V\subseteq \bbR^d$ with $d(U,V)\geq r$,
where $\calE(U)$ is the $\sigma$-algebra generated by 
the family of random variables 
$\{X_{x,u}  \,|\, (x,u)\in U\times [0,1]\}$
and $d(\cdot,\cdot)$ is the standard distance in $\bbR^d$.
\end{definition}


Since we are interested in ignition reactions, we assume the following hypothesis. 

\smallskip
\begin{itemize}
    \item[\textbf{(H1)}] The reaction  $f$ is stationary, Lipschitz in both $x$ and $u$ with constant ${M}\geq 1$, 
   and there are $\theta_1\in (0,\frac 12)$, $m_1>1$, and $\al_1>0$ such that
    $f(\cdot,u,\cdot)\equiv 0$ for $u\in [0,\theta_1]\cup\{1\}$, $f(\cdot,u,\cdot)\ge \al_1 (1-u)^{m_1}$ for $u\in [1-\theta_1,1)$, and  $f$ is non-increasing in $u\in [1-\theta_1,1)$.
\end{itemize}
\smallskip

In fact, we need to assume slightly more, since  one cannot hope for general reactions satisfying \textbf{(H1)} to lead to homogenization for  \eqref{1.1} as described above, even for homogeneous reactions $f(x,u,\omega)=f(u)$.  Indeed, if $f$ is allowed to vanish at some intermediate value $\theta'\in(\theta_1,1-\theta_1)$ and is also ``sufficiently larger'' on  $(\theta_1,\theta')$ than on $(\theta',1)$, solutions typically form ``plateaus'' at value $\theta'$ (or another intermediate value) whose widths grow linearly in time, and so these plateaus will not disappear as $\eps\to 0$ and the scaling \eqref{1.3} is applied   (see \cite{ZlaInhomog, ZlaBist} for more details).
To avoid this scenario, we make the following definition.

\begin{definition}\lb{D.2}
A reaction $f$ satisfying  \textbf{(H1)} is a stationary {\it pure ignition} reaction if for each $\eta>0$ we have
\[
\inf_{\substack{(x,\omega)\in\bbR^d\times\Omega \\ \theta_{x,\omega} + \eta < 1-\theta_1 }} f(x,\theta_{x,\omega}+\eta,\omega)>0,
\]
where the {\it ignition temperature} $\theta_{x,\omega}$ is defined by
\[
\theta_{x,\omega} := \sup \{\theta\ge 0 \,|\, f(x,u,\omega)=0 \text{ for all $u\in[0,\theta]$}\} \qquad (\in [\theta_1,1-\theta_1)).
\]
\end{definition}

As the second author showed in \cite{ZlaInhomog}, the linearly growing plateaus scenario may occur even for pure ignition reactions, but only in dimensions $d\ge 4$ (this relates to transience of Brownian motion in $\bbR^{d-1}$).  Therefore our main hypothesis on the reaction $f$ is the following.


    

  
  





 \smallskip

\begin{itemize}
    \item[\textbf{(H2)}]  $f$ is a stationary pure ignition  reaction and $d\leq 3$.
\end{itemize}
\smallskip

Finally, we will assume that $f$ either has a finite range of dependence, or is close enough to such reactions and has certain uniform decay in $u$ near $u=1$.  
The following two hypotheses relate to the second alternative.

\smallskip

\begin{itemize}
\item[\textbf{(H3)}] There are $m_3\geq 1$ and $\al_3>0$ such that for all $\eta\in (0,\frac 12\theta_1]$ we have 
\[
\inf_{\substack{
(x,\omega)\in\bbR^d\times\Omega\\
    u\in [1- \theta_1/2,1]}}\left(f(x,u-\eta,\omega)-f(x,u,\omega)\right)\geq \al_3 \eta^{m_3}.
    \]
\end{itemize}

\smallskip

\begin{itemize}
\item[\textbf{(H4)}] 
There are $m_4,n_4,\al_4>0$ such that for each $n\geq n_4$, there exists a stationary reaction $f_n$
with range of dependence $\le n$ and $\|f_n-f\|_\infty \leq \al_4 {n^{-m_4}}$.
\end{itemize}

\smallskip

We are now ready to state our  main result. In it  we denote $B_r(A):=A+(B_r(0)\cup\{0\})$ and $A^0_r:=A\backslash\overline{B_r(\partial A)}$ for $A\subseteq\bbR^d$ and $r\ge 0$ (in particular, $A^0_0$ is the interior of $A$).  Note that if $A$ is convex, so are $B_r(A)$ and $A^0_r$.  We also let $\tilde \sigma:=\min\left\{\frac{1}{8m_1},\frac{m_4}{4m_3+8m_4}\right\}$, where we ignore the second term when $f$ is assumed to have a finite range of dependence (and so {\bf (H3)--(H4)} is not assumed).

\begin{theorem}\lb{T.1.1}
Assume that $f$ satisfying \textbf{(H2)} either has a finite range of dependence or satisfies \textbf{(H3)}--\textbf{(H4)}.  There is a Lipschitz function $c^*:\mathbb{S}^{d-1}\to (0,\infty)$ such that if
 $u_\eps$ solves \eqref{1.4} and for some open bounded convex set $A \subseteq \bbR^d$ and some $\nu> 0$ we have
 \[
(1-\theta_1)\chi_{A^0_{ \eps^\nu}}\leq {u_\eps}(0,\cdot,\omega)\leq \chi_{B_{\eps^\nu}(A)}
\]
for each $\eps>0$,
then  the following holds with $\sigma:=\frac 12\min\left\{\tilde\sigma,\nu \right\}$.
For any $\theta\in (0,1)$ and $T_0>0$, there are constants  $C_0=C_0({M},\theta_1,m_1,\alpha_1,A,\theta)$ and $\eps_0>0$
such for all $\eps\in (0,\eps_0]$ we have
\[
\bbP\left[ \left(\Theta^{A,c^*}(t) \right)^0_{\eps^{{\sigma}}} \subseteq \Gamma_{u_\eps,\theta}(t,\cdot)\subseteq B_{\eps^{{\sigma}}} \left(\Theta^{A,c^*}(t) \right)  \text{ for all }t\in[C_0\eps ,T_0]\right]\geq  1- \exp\left(-\eps^{-{2\sigma}}\right).
\]

\end{theorem}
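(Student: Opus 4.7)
The plan is to reduce the set-containment statements, direction by direction, to the quantitative arrival-time estimates of Proposition \ref{P.3.1} for solutions of \eqref{1.1} with half-space initial data. The geometric input is the intersection representation \eqref{6.2}, which writes $\Theta^{A,c^*}(t)=\bigcap_{e\in\bbS^{d-1}}\{x\in\bbR^d:x\cdot e<h_A(e)+c^*(e)t\}$ with $h_A(e):=\sup_{y\in A}y\cdot e$. I would discretize $\bbS^{d-1}$ by an $\eps^\sigma$-net $N$ of cardinality $O(\eps^{-(d-1)\sigma})$, prove the two one-sided inclusions direction by direction for each $e\in N$ and each $t$ on a fine time grid in $[C_0\eps,T_0]$ (interpolation in $t$ being handled by standard parabolic regularity for $u_\eps$), and absorb the discretization errors via the Lipschitz continuity of $c^*$. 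The union bound over polynomially many events is comfortably swallowed by the single-direction failure probability $\exp(-C\eps^{-2\tilde\sigma})$ supplied by Proposition \ref{P.3.1}, producing the required $1-\exp(-\eps^{-2\sigma})$ bound.

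For the outer inclusion, the maximum principle gives $u_\eps(t,\cdot,\omega)\le w_\eps^e(t,\cdot,\omega)$ for every $e\in\bbS^{d-1}$, where $w_\eps^e$ solves \eqref{1.4} from the half-space initial datum $\chi_{\{z:\,z\cdot e\le h_A(e)+\eps^\nu\}}$, which dominates the prescribed $\chi_{B_{\eps^\nu}(A)}$. After rescaling Proposition \ref{P.3.1} from \eqref{1.1} to \eqref{1.4}, it asserts that with probability $\ge 1-\exp(-C\eps^{-2\tilde\sigma})$ the $\theta$-super-level set of $w_\eps^e(t,\cdot,\omega)$ is contained in $\{z:\,z\cdot e\le h_A(e)+c^*(e)t+\eps^{\tilde\sigma}\}$ uniformly for $t\in[C_0\eps,T_0]$. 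Intersecting over $e\in N$ and paying the Lipschitz cost for directions outside $N$ produces $\Gamma_{u_\eps,\theta}(t,\cdot)\subseteq B_{\eps^\sigma}(\Theta^{A,c^*}(t))$.

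The inner inclusion demands matching lower barriers. For each $e\in N$ I inscribe inside $A^0_{\eps^\nu}$ a front-facing convex cap $D_e$ whose leading boundary in direction $e$ lies on the hyperplane $\{z\cdot e=h_A(e)-O(\eps^\nu)\}$ and whose inradius is bounded below by some $r_0=r_0(A)>0$. After a formation time $C_0\eps$ the maximum principle raises $u_\eps$ above $1-\tfrac12\theta_1$ on $D_e$; finite speed of propagation then permits comparing $u_\eps$ from below, on a ball of radius $O(T_0)$ around a suitable point of $D_e$, with the half-space solution $\tilde w_\eps^e$ of \eqref{1.4} from $(1-\theta_1)\chi_{\{z:\,z\cdot e\le h_A(e)-\eps^\nu\}}$. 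A second application of Proposition \ref{P.3.1} to $\tilde w_\eps^e$ yields $u_\eps(t,x,\omega)\ge\theta$ whenever $x\cdot e\le h_A(e)+c^*(e)t-\eps^{\tilde\sigma}$ and $x$ lies in this $O(T_0)$-ball, and intersecting over $e\in N$ together with Lipschitz interpolation of $c^*$ gives $(\Theta^{A,c^*}(t))^0_{\eps^\sigma}\subseteq\Gamma_{u_\eps,\theta}(t,\cdot)$.

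The principal difficulty is the inner inclusion, since the boundedness of $A$ forbids using a genuine half-space lower barrier. The argument must therefore construct $D_e$ uniformly in $e$ inside $A^0_{\eps^\nu}$ (which is delicate near irregular boundary points of $A$, for instance at vertices of a polytope), control the short formation time during which $u_\eps$ climbs above $1-\tfrac12\theta_1$ on $D_e$, and use finite speed of propagation to replace $D_e$ by the full half-space datum on the relevant $O(T_0)$-ball without cost. Balancing the three error scales, namely $\eps^\nu$ from the initial approximation, $\eps^{\tilde\sigma}$ from Proposition \ref{P.3.1}, and $\eps^\sigma$ from the direction net and the Lipschitz continuity of $c^*$, then produces the advertised exponent $\sigma=\tfrac12\min(\tilde\sigma,\nu)$.
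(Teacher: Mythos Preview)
Your outer inclusion is essentially the paper's argument: dominate $u_\eps$ by half-space solutions from above, apply Proposition~\ref{P.3.1}, and pass from a finite collection of events to all $(t,e)$ by Lipschitz/monotonicity interpolation and a union bound. That part is fine.

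The inner inclusion, however, has a genuine gap. You want to compare $u_\eps$ from below with the half-space solution $\tilde w_\eps^e$ on a ball of radius $O(T_0)$, invoking finite speed of propagation (Lemma~\ref{L.2.1}) to replace the bounded cap $D_e\subseteq A$ by the full half-space datum. But Lemma~\ref{L.2.1} gives $u_\eps(t,x)\ge \tilde w_\eps^e(t,x)-2d\exp\bigl(\eps^{-1}\sqrt{M/d}\,(|x-y'|-r'+c_1 t)\bigr)$, where $r'$ is the radius of the ball on which the initial data agree. Since $D_e\subseteq A$, one has $r'\le\diam(A)$, while the points $x\in(\Theta^{A,c^*}(t))^0_{\eps^\sigma}$ you must reach satisfy $|x-y'|+c_1 t$ of order $\diam(A)+2c_1 T_0$. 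Thus the exponent is positive once $T_0\gtrsim \diam(A)/c_1$, and the error term is not small but of order~$1$. In other words, the influence of the region $H\setminus A$ (where $\tilde w_\eps^e(0,\cdot)>u_\eps(0,\cdot)$) propagates inward at speed $c_1$ and destroys the comparison well before time $T_0$. No choice of $D_e$ with inradius $r_0(A)$ can repair this, since the obstruction is the boundedness of $A$ in directions transverse to $e$.

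The paper circumvents this by an \emph{iterative} construction that only runs half-space comparisons over \emph{short} time steps. One first perturbs $(A,c^*)$ to $(A',c')$ via Lemma~\ref{L.7.3} so that every intermediate set $\Theta^{A',c'}(t)$ satisfies an $r_\eps$-interior ball condition with $r_\eps=\eps^{\sigma''}$; this is nontrivial because $\Theta^{A,c^*}(t)$ itself need not satisfy any uniform interior ball condition. One then defines $\Theta^k_{\eps,T}:=\Theta^{A',c'}(kr_\eps^2)$ and the arrival times $t_k(\omega)$, and bounds $t_{k+1}-t_k$ by comparing, at each boundary point of $\Theta^k_{\eps,T}$, with a half-space solution. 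The interior ball of radius $r_\eps$ now plays the role of your $D_e$, but the comparison is only needed for (unscaled) time $\sim\eps^{-1}r_\eps^2$ and on a ball of (unscaled) radius $\sim\eps^{-1}r_\eps^2$, so Corollary~\ref{L.2.9} applies because $c_1\cdot(\text{time})\ll r_\eps$. Summing $K=\lceil T r_\eps^{-2}\rceil$ such steps and a union bound yields the inner inclusion. The short-step iteration, together with the approximation Lemma~\ref{L.7.3} that manufactures the interior ball condition, is the missing idea in your proposal.
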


\noindent {\it Remarks. } 1.  
The limitation of the above estimate to times $t\ge  C_0\eps$ is necessary because if $\theta$ is close to 1, it takes time $O(\eps)$ for $u_\eps$ to reach the value $\theta$.  If $\theta < 1-\theta_1$, then it is not difficult to show that Theorem \ref{T.1.1} extends to include  $t\in [0,T_0]$ in the statement because both inclusions then hold for all $(t,\omega)\in[0,C_0\eps]\times\Omega$ when $\eps>0$ is small enough.
\smallskip

2. 
We can also determine on which parameters $\eps_0$ depends.
It turns out that there is some $\eta_*=\eta_*(M, \theta_1,m_1,\alpha_1)>0$ such that if for some $\xi>0$ we have
\beq\lb{1.111}
\inf_{\substack{(x,\omega)\in\bbR^d\times\Omega \\ u\in[\theta_{x,\omega} + \eta_* ,1-\theta_1] }} f(x,u,\omega)\ge \xi,
\eeq
then $\eps_0$   can be chosen to depend only on $A,\nu,\theta, T_0$ plus on
\beq\lb{const}
{M},\theta_1,m_1,\alpha_1, \xi,  \text{\,\,\,\,and either\,\,\,\,} \rho \text{\,\,\,\,or\,\,\,\,} m_3,\al_3,m_4,n_4,\al_4,
\eeq
depending on whether we assume \textbf{(H2)} plus $f$ having range of dependence at most $\rho\in[1,\infty)$, or we assume \textbf{(H2)--(H4)}.  See the proof of Theorem \ref{T.1.1} for details on this.
\smallskip

3.
Lemma \ref{L.2.1} below shows that if $A$  is unbounded (but still convex), then Theorem \ref{T.1.1} holds locally uniformly, that is, with $\left(\Theta^{A,c^*}(t) \right)^0_{\eps^{{\sigma}}}$ and $\Gamma_{u_\eps,\theta}(t,\cdot)$ replaced by their intersections with $B_N(0)$, for any $N\in\mathbb N$ ($C_0$ and $\eps_0$ then also depend on $N$).
\smallskip

4.  We make here no attempt to optimize the power $\sigma$
in Theorem \ref{T.1.1}.

\smallskip




\smallskip

\subsection{Organization of the Paper and Acknowledgements}
In Section \ref{S2} we collect several important preliminary results as well as most of the notation used later.  In Section \ref{S3}, we construct certain regularized approximations of the sets $\Theta^{A,c^*}$, which are then used in the proof of Theorem~\ref{T.1.1} in Section \ref{S4}.

YPZ acknowledges partial support by an AMS-Simons Travel Grant.  AZ acknowledges partial support by  NSF grant DMS-1900943 and by a Simons Fellowship.

\section{Preliminaries and Notation}\lb{S2}

Most of the results in this section are from \cite{ZhaZla}, and we reproduce them here for the reader's convenience. Many of them hold uniformly in $\omega$ and even without assuming stationarity of the reaction, and in these we will therefore replace {\bf (H1)} by the following weaker hypothesis.\smallskip

\begin{itemize}
\item[\textbf{(H1')}] 
$f$ satisfies \textbf{(H1)} except possibly the stationarity hypothesis.
\end{itemize}
\smallskip

We collect the needed results assuming {\bf (H1')} in the following subsection.


\subsection{General Ignition Reactions}

 Let us start with a basic lower bound which shows that general solutions to \eqref{1.1} propagate with speed no less than some $c_0>0$  (see \cite{ZlaInhomog}).  Consider  the  largest $M$-Lipschitz function $F_0:[0,1]\to[0,\infty)$ such that $F_0(u)\le \alpha_1(1-u)^{m_1} \chi_{[1-\theta_1,1]}(u)$ for all $u\in[0,1]$, which of course guarantees that $f(x,\cdot,\omega)\ge F_0$ for all $\omega\in\Omega$ when $f$ satisfies {\bf (H1')}. 
 Then $F_0$ is a homogeneous pure ignition reaction, and we let $c_0>0$  be its traveling front speed (i.e., such that the PDE $u_t=u_{xx} + F_0(u)$ in one spatial dimension has a {\it traveling front} solution $u(t,x)=U(x-c_0t)$, with $U(-\infty)=1$ and $U(\infty)=0$).  

\begin{lemma}\lb{L.2.2}
There exists $\theta_2=\theta_2({M},\theta_1,m_1,\alpha_1)<1$ 
such that for each $c<c_0$ and $\theta<1$, there is $\kappa_0=\kappa_0({M},\theta_1,m_1,\alpha_1,c,\theta)\geq 1$ such that the following holds. If $u:(0,\infty)\times \bbR^d\to [0,1]$ is a solution to \eqref{1.1} with $f$ satisfying \textbf{(H1')} 
and with some $\omega\in\Omega$, 
and if $u(t_0,y)\geq \theta_2$ for some $t_0\ge 1$ and $y\in\bbR^d$, then for all $t\geq t_0+\kappa_0$, 
\[
\inf_{|x-y|\leq c(t-t_0) }u(t,x) \geq \theta.
\]
If also $u_t\ge 0$, then this clearly holds with any $t_0\ge 0$ (and $\kappa_0$ increased by 1).
\end{lemma}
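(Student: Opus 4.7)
The approach is via the parabolic comparison principle: since $f \geq F_0$ on $[0,1]$, any solution $u$ of \eqref{1.1} is a supersolution of the homogeneous ignition equation $v_t = \Delta v + F_0(v)$, so the lemma reduces to a deterministic statement about the traveling-front dynamics of $F_0$.

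\textbf{Step 1 (Point value to spreading-ready plateau).} I would first use standard interior parabolic regularity for \eqref{1.1} on $[t_0 - 1, t_0] \times \bbR^d$ (for which $t_0 \geq 1$ and $f$ being $M$-Lipschitz with $u \in [0, 1]$ are needed) to obtain a modulus-of-continuity estimate for $u(t_0, \cdot)$ at $y$, depending only on $M$. Choosing $\theta_2$ close enough to $1$ then yields $u(t_0, \cdot) \geq 1 - \theta_1 + \delta_0$ on a ball $B_{\rho_0}(y)$ for some $\rho_0, \delta_0 > 0$ depending only on $M, \theta_1$. Using the quantitative lower bound $F_0(u) \geq \alpha_1 (1 - u)^{m_1}$ for $u \in [1 - \theta_1, 1)$, a standard spreading argument for pure ignition reactions (e.g., by iterating the radial travelling-front sub-solution of Step 2 at slightly reduced speed) then shows that after a bounded time $\tau_0 = \tau_0(M, \theta_1, m_1, \alpha_1, r_1, \tilde\eta)$ one has $u(t_0 + \tau_0, \cdot) \geq 1 - \tilde\eta$ on a ball $B_{r_1}(y)$ of any prescribed radius $r_1$. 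The $u_t \geq 0$ case is handled by first waiting one time unit before invoking the regularity step.

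\textbf{Step 2 (Radial front sub-solution).} Let $U_0$ be the monotone 1D travelling-front profile of $F_0$ at speed $c_0$, so that $U_0'' + c_0 U_0' + F_0(U_0) = 0$, $U_0(-\infty) = 1$, $U_0(+\infty) = 0$, and $U_0' \leq 0$. Fix $c' \in (c, c_0)$ and set
\[
\psi(t, x) := \min\bigl\{ U_0(|x - y| - r_1 - c'(t - t_0 - \tau_0)),\ 1 - \tilde\eta \bigr\}.
\]
Writing $s := |x - y| - r_1 - c'(t - t_0 - \tau_0)$ and using $U_0'' = -c_0 U_0' - F_0(U_0)$, one computes
\[
\psi_t - \Delta \psi - F_0(\psi) = \Bigl( c_0 - c' - \tfrac{d-1}{|x - y|} \Bigr) U_0'(s) \leq 0
\]
on the region where the minimum is the first term, as long as $|x - y| \geq r_* := (d - 1)/(c_0 - c')$. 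Combining a standard compact-support truncation of $U_0$ (e.g., replacing $U_0$ by $(U_0 - U_0(s_0))_+$ at slightly reduced effective speed) with the choice $r_1 > r_* + L$ (where $L$ bounds the support width of the truncated front) makes $\psi$ a global sub-solution of the homogeneous PDE with $\psi(t_0 + \tau_0, \cdot) \leq u(t_0 + \tau_0, \cdot)$. Comparison then gives $u(t, \cdot) \geq \psi(t, \cdot)$ for $t \geq t_0 + \tau_0$, and choosing $L_\theta$ with $U_0(s) \geq \theta$ for $s \leq -L_\theta$ yields $u(t, x) \geq \theta$ on $\{|x - y| \leq r_1 - L_\theta + c'(t - t_0 - \tau_0)\}$. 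Since $c' > c$, this covers $\{|x - y| \leq c(t - t_0)\}$ for all $t \geq t_0 + \kappa_0$ with $\kappa_0$ chosen sufficiently large.

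\textbf{Main obstacle.} The hardest part is the compatibility of the parameter choices: $\theta_2$ must be close enough to $1$ that the parabolic modulus of continuity in Step 1 leaves us above the ignition threshold $1 - \theta_1$ on a ball of prescribed radius; the initial radius $r_1$ of the radial sub-solution must simultaneously exceed $r_* = (d - 1)/(c_0 - c')$ and absorb the support width of the truncated front; and the speed loss from truncation must keep $c'$ above $c$. These constraints can be met thanks to the strict positivity of the pure-ignition front speed $c_0 = c_0(M, \theta_1, m_1, \alpha_1)$ together with the quantitative lower bound $F_0(u) \geq \alpha_1 (1 - u)^{m_1}$ near $u = 1$, but the resulting $\kappa_0$ depends on $c$ and $\theta$ through $r_*$, $L_\theta$, and $\tau_0$.
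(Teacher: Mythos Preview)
The paper does not prove this lemma; it is quoted as a preliminary result from \cite{ZlaInhomog} (see the sentence preceding the statement in Section~\ref{S2}), so there is no in-paper argument to compare against. Your outline --- reduce to the homogeneous equation $v_t=\Delta v+F_0(v)$ via $f\ge F_0$ and the comparison principle, then drive spreading with a radially expanding subsolution built from the planar front profile $U_0$ --- is the standard route and matches the argument in that reference and its Aronson--Weinberger predecessors. The computation in Step~2 and the discussion in your ``Main obstacle'' paragraph are accurate.

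One point to tighten: the bootstrap in Step~1 cannot literally be carried out by ``iterating the radial travelling-front sub-solution of Step~2.'' That subsolution is only valid once the starting radius already exceeds $r_*=(d-1)/(c_0-c')$ plus the truncated-front width $L$, whereas parabolic regularity from a single point value supplies only a ball of radius $\rho_0\lesssim\theta_1/C_M$, and there is no a priori relation forcing $\rho_0>r_*+L$ (taking $c'$ small does not help, since $r_*\ge(d-1)/c_0$ for every $c'\in(0,c_0)$). The standard fix is a separate ignition-phase argument: one places under the regularity cone $\theta_2-C_M|\cdot-y|$ a compactly supported stationary (or slowly growing) subsolution of $\Delta\Phi+F_0(\Phi)\ge 0$, and uses that the homogeneous solution starting above $\Phi$ is time-increasing and converges locally uniformly to~$1$; after a bounded time this furnishes a ball of any prescribed radius, and Step~2 then takes over. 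The hypotheses you single out --- positivity of $c_0$ and the quantitative lower bound $F_0(u)\ge\alpha_1(1-u)^{m_1}$ near $u=1$ --- are precisely what make this bump construction go through, but they enter there rather than through an iteration of Step~2.
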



Let
\beq\lb{2.10}
\theta^*:=\frac{\min\{1-\theta_2,\,\theta_1\}}4.
\eeq
The next few results are from \cite{ZhaZla}, and stated there with $\theta_2=\theta_2(M,\frac 12\theta_1,m_1,\alpha_1(1-\frac 18\theta_1)^{m_1-1})$ in the definition of $\theta^*$; however, the remark after \cite[Lemma 2.1]{ZhaZla} explains that they also hold with \eqref{2.10} and $\theta_2=\theta_2(M,\theta_1,m_1,\alpha_1)$ (moreover, this distinction will be of no consequence here).
The first of these is \cite[Lemma 2.8]{ZhaZla},
which provides an upper bound on $\kappa_0({M},\theta_1,m_1,\alpha_1,\frac{c_0}4,\theta)$ from Lemma \ref{L.2.2} as $\theta\to 1$.


\begin{lemma}\label{L.2.8}
Let $u:[0,\infty)\times\bbR^d\to [0,1]$ solve \eqref{1.1} with $f$ satisfying \textbf{(H1')} and some $\omega\in\Omega$.  There is $D_1=D_1({M},\theta_1,m_1,\alpha_1)$ such that if $u(t_0,y)\geq 1-\theta^*$ for some $t_0\ge 1$ and $y\in\bbR^d$, then for any  $\theta\in[1-\theta^*,1)$ and $t\geq t_0 + D_1 (1-\theta)^{1-m_1}$  we have
\[
\inf_{|x-y|\le c_0(t-t_0)/4} u(t,x)\geq \theta .
\]
\end{lemma}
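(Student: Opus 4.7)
\noindent\textit{Proof plan.} The strategy is to combine the spreading estimate of Lemma~\ref{L.2.2} with a localized ODE comparison on a large ball. First, I apply Lemma~\ref{L.2.2} with $c:=c_0/2$ and target value $1-\theta^*$. Since \eqref{2.10} gives $1-\theta^*\ge 1-\theta_2$, the hypothesis $u(t_0,y)\ge 1-\theta^*$ activates that lemma and furnishes a universal constant $\kappa=\kappa(M,\theta_1,m_1,\alpha_1)$ such that
\[
u(s,z)\ge 1-\theta^* \qquad \text{whenever } s\ge t_0+\kappa \text{ and } |z-y|\le (c_0/2)(s-t_0).
\]
Because $1-\theta^*\ge 1-\theta_1$ by \eqref{2.10}, hypothesis \textbf{(H1')} forces $f(\cdot,u,\cdot)\ge\alpha_1(1-u)^{m_1}$ throughout this expanding region.

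Next I introduce the scalar ODE $\dot V=\alpha_1(1-V)^{m_1}$ with $V(0)=1-\theta^*$, which integrates explicitly to show $V(s)\ge\theta$ once $s\ge C_*(1-\theta)^{1-m_1}$ for some $C_*=C_*(m_1,\alpha_1)$. Given a target $(t,x)$ with $|x-y|\le c_0(t-t_0)/4$ and $t\ge t_0+D_1(1-\theta)^{1-m_1}$, set $s_0:=t-C_*(1-\theta)^{1-m_1}$. A short arithmetic check shows that if $D_1\ge 4C_*$ is taken large enough in terms of $\kappa, c_0, \theta^*$, then $s_0\ge t_0+\kappa$ and the buffer radius
\[
R:=\tfrac{c_0}{2}(s_0-t_0)-|x-y|\;\ge\; \tfrac{c_0}{8}\,D_1\,(1-\theta)^{1-m_1}
\]
is as large as desired. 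In particular $B_R(x)\subseteq B_{(c_0/2)(s-t_0)}(y)$ for each $s\in[s_0,t]$, so $u\ge 1-\theta^*$ on the whole parabolic cylinder $B_R(x)\times[s_0,t]$ and the reaction satisfies $f(\cdot,u,\cdot)\ge\alpha_1(1-u)^{m_1}$ there.

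The final step is a radial subsolution argument on this cylinder for the equation $\underline w_t=\Delta\underline w+\alpha_1(1-\underline w)^{m_1}$, which $u$ supersolves there. I would take
\[
\underline w(s,z):=(1-\theta^*)+\bigl(\tilde V(s-s_0)-(1-\theta^*)\bigr)\phi_R(z-x),
\]
where $\phi_R$ is the first Dirichlet eigenfunction of $-\Delta$ on $B_R(0)$ normalized by $\phi_R(0)=1$ with eigenvalue $\mu_R=\lambda_1/R^2$, and $\tilde V$ solves the perturbed ODE $\dot{\tilde V}=\alpha_1(1-\tilde V)^{m_1}-\mu_R\theta^*$ starting at $1-\theta^*$. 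A direct calculation then shows that $\underline w$ is a subsolution and, by construction, $\underline w\le 1-\theta^*\le u$ on the parabolic boundary of $B_R(x)\times[s_0,t]$, so the comparison principle yields $u(t,x)\ge\underline w(t,x)=\tilde V(t-s_0)$.

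The main obstacle is controlling the perturbation $\mu_R\theta^*$ so that $\tilde V$ still reaches $\theta$ within the time $C_*(1-\theta)^{1-m_1}$, uniformly in $\theta\in[1-\theta^*,1)$. This reduces to comparing $\mu_R\sim D_1^{-2}(1-\theta)^{2(m_1-1)}$ with $\alpha_1(1-\theta)^{m_1}$; since $2(m_1-1)\ge m_1$ fails in general but the relevant condition $\mu_R\theta^*\le \tfrac12\alpha_1(1-\theta)^{m_1}$ holds once $D_1$ is large enough in terms of $M,\theta_1,m_1,\alpha_1$ (because the worst case occurs at $\theta=1-\theta^*$), one concludes $\tilde V(t-s_0)\ge\theta$, yielding the lemma.
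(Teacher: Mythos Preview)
Your overall plan---use Lemma~\ref{L.2.2} to spread the level set $\{u\ge 1-\theta^*\}$ over a large ball, then run a localized ODE comparison on that ball---is exactly right, and it is the approach underlying the result (the paper imports the lemma from \cite{ZhaZla} and uses the same mechanism in the proof of \eqref{4.4}).  However, the \emph{implementation} of the second step via the Dirichlet eigenfunction has a genuine gap when $m_1\in(1,2)$.

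The perturbed ODE $\dot{\tilde V}=\alpha_1(1-\tilde V)^{m_1}-\mu_R\theta^*$ has a stable equilibrium at the value $\tilde V_\infty$ where $\alpha_1(1-\tilde V_\infty)^{m_1}=\mu_R\theta^*$, and $\tilde V$ can never exceed $\tilde V_\infty$.  Your requirement $\mu_R\theta^*\le\tfrac12\alpha_1(1-\theta)^{m_1}$ is therefore not a mere convenience but is \emph{necessary} (up to the factor $\tfrac12$) for $\tilde V$ to reach $\theta$ at all.  With $\mu_R\sim D_1^{-2}(1-\theta)^{2(m_1-1)}$ this becomes $D_1^2\gtrsim(1-\theta)^{m_1-2}$.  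For $m_1<2$ the right-hand side blows up as $\theta\to 1$, so no $\theta$-independent choice of $D_1$ works.  Your assertion that ``the worst case occurs at $\theta=1-\theta^*$'' is backwards: when $m_1<2$ the quantity $(1-\theta)^{m_1-2}$ is \emph{largest} as $\theta\to 1$, not at $\theta=1-\theta^*$.

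The fix is to replace the eigenfunction subsolution by a direct application of Lemma~\ref{L.2.1}.  Let $V$ solve $\dot V=\alpha_1(1-V)^{m_1}$ with $V(0)=1-\theta^*$; since $V\ge 1-\theta^*\ge 1-\theta_1$, hypothesis \textbf{(H1')} makes the constant-in-space function $(s,z)\mapsto V(s-s_0)$ a global subsolution to \eqref{1.1}.  Lemma~\ref{L.2.1} with $u_1=V(\cdot-s_0)$, $u_2=u$, center $x$, and radius $R$ then gives
\[
u(t,x)\ge V(t-s_0)-2d\,\exp\!\Big(\sqrt{M/d}\,\big(2\sqrt{Md}\,(t-s_0)-R\big)\Big).
\]
Choosing $t-s_0=\tfrac{2}{\alpha_1(m_1-1)}(1-\theta)^{1-m_1}$ makes $V(t-s_0)\ge\theta+c_m(1-\theta)$ for some $c_m=c_m(m_1)>0$, while $R\ge \tfrac{c_0}{8}D_1(1-\theta)^{1-m_1}$ as you computed.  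Since both $2\sqrt{Md}\,(t-s_0)$ and $\sqrt{d/M}\,\ln\frac{2d}{c_m(1-\theta)}$ are bounded by a constant (depending only on $M,\theta_1,m_1,\alpha_1$) times $(1-\theta)^{1-m_1}$, taking $D_1$ large enough forces the exponential error below $c_m(1-\theta)$ uniformly in $\theta\in[1-\theta^*,1)$.  This is precisely the device used in the paper around \eqref{4.4}, and it closes the argument for all $m_1>1$.
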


Throughout the rest of the paper we will primarily use Lemma \ref{L.2.2} with $c=\frac{c_0}{2}$ and $\theta=1-\theta^*$, 
hence we define
\[
\kappa_0:=\kappa_0 \left({M},\theta_1,m_1,\alpha_1, \frac {c_0}2,1-\theta^* \right).
\]


The next result is \cite[Lemma 2.2]{ZhaZla}, which constructs smooth initial data $u_{0,S}$ that approximate $(1-\theta^*)\chi_{S}$ and the corresponding solutions satisfy $u_t\ge 0$.

\begin{lemma}\lb{L.2.4}
There is $R_0=R_0({M},\theta_1,m_1,\alpha_1)\ge 1$ such that for any $f$ satisfying \textbf{(H1')}  and $S\subseteq \bbR^d$, there is a smooth function $u_{0,S}$ satisfying 
\[
\Delta u_{0,S}+{F_0}(u_{0,S})\geq 0,
\]
and
\[
(1-\theta^*)\chi_{S}\leq u_{0,S}\leq (1-\theta^*)\chi_{B_{R_0}(S)}.
\]

\end{lemma}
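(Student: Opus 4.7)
}
The plan is to build $u_{0,S}$ from a fixed radial ``bump'' and the shape of $S$, in three steps.

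First I would construct a smooth radial profile $\Phi:\bbR^d\to[0,1-\theta^*]$, depending only on $M,\theta_1,m_1,\alpha_1$, given by $\Phi(x)=\phi(|x|)$ with $\phi$ smooth and non-increasing, $\phi\equiv 1-\theta^*$ on $[0,r_1]$, $\phi\equiv 0$ on $[r_2,\infty)$, and satisfying the pointwise inequality $\Delta\Phi+F_0(\Phi)\ge c$ on the transition annulus $r_1<|x|<r_2$ for some $c>0$. Writing the radial Laplacian as $\phi''(r)+(d-1)\phi'(r)/r$, the negative term $(d-1)\phi'(r)/r$ (with $\phi'\le 0$) is the main dimensional obstruction. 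I would split the transition into an \emph{upper} part where $\phi(r)>1-\theta_1$ — where $F_0(\phi)$ is bounded below by a positive constant that dominates the curvature loss once $|\phi'|$ is taken small — and a \emph{lower} part where $\phi(r)\le 1-\theta_1$, where $F_0(\phi)=0$ forces strict radial subharmonicity, achievable by e.g.\ taking $\phi(r)=A(r_2-r)^2$ on that sub-interval with $r_2$ large enough that $(d-1)/r$ is dominated by $1/(r_2-r)$ throughout.

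Second, for a general set $S$, I would put
\[
\tilde u_{0,S}(x):=\sup_{y\in\overline S}\Phi(x-y)=\phi(d(x,\overline S)),
\]
where the second equality uses monotonicity and radiality of $\Phi$. This $\tilde u_{0,S}$ is Lipschitz, equals $1-\theta^*$ on $B_{r_1}(\overline S)\supseteq S$, vanishes outside $B_{r_2}(\overline S)$, takes values in $[0,1-\theta^*]$, and — as a supremum of smooth classical subsolutions of the semilinear equation $\Delta u+F_0(u)\ge 0$ — is a viscosity (hence distributional) subsolution with the same slack $c$ inherited from the first step. Finally I would mollify, setting $u_{0,S}:=\tilde u_{0,S}*\eta_\eps$ for a standard $C^\infty$ kernel of small radius $\eps<r_1$. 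Then $u_{0,S}$ is smooth, coincides with $1-\theta^*$ on $B_{r_1-\eps}(\overline S)\supseteq S$, coincides with $0$ outside $B_{r_2+\eps}(S)$, and still satisfies $0\le u_{0,S}\le 1-\theta^*$. Convolving the distributional inequality for $\tilde u_{0,S}$ with $\eta_\eps$ and using the Lipschitz continuity of $F_0$ together with the uniform Lipschitz constant of $\tilde u_{0,S}$, the pointwise inequality $\Delta u_{0,S}+F_0(u_{0,S})\ge 0$ holds provided $\eps$ is small enough relative to $c$; taking $R_0:=r_2+\eps$ then completes the construction, with $R_0$ depending only on the reaction data.

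The main obstacle is the first step: building a non-trivial compactly supported stationary subsolution of $\Delta u+F_0(u)\ge 0$ in dimension $d\ge 2$. The radial curvature term $(d-1)\phi'/r$ works against us when $\phi$ is decreasing, and the argument has to lean on the two-tier structure of the ignition reaction $F_0$ — strict positivity on $(1-\theta_1,1)$ to power the upper transition, and explicit radial subharmonicity for the lower transition — to close the inequality with positive slack. The smoothing step is standard in spirit but not automatic because $F_0$ is not concave, so Jensen's inequality does not preserve subsolutions; one must instead exploit the strict slack $c>0$ from the first step and the Lipschitz bound on $F_0$.
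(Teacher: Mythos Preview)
The paper does not prove Lemma~\ref{L.2.4}; it is simply quoted as \cite[Lemma~2.2]{ZhaZla}, so there is no in-paper argument to compare your plan against.

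Your three-step scheme is sound and is in fact the natural way to build such data. Two points deserve care when you flesh it out. First, in Step~1 the slack $c$ cannot be uniform across the whole transition annulus in the na\"ive sense, because $F_0(\phi)\to 0$ as $\phi\downarrow 1-\theta_1$; you should arrange $\phi''>0$ in a neighborhood of the level $\{\phi=1-\theta_1\}$ so that the convexity of $\phi$ itself supplies the slack where $F_0(\phi)$ does not. Second, for the mollification in Step~3, the global slack argument closes cleanly once you also require the lower transition interval $[r^*,r_2]$ (where $\phi\le 1-\theta_1$) to have length larger than $2\eps$: then whenever $B_\eps(x)$ meets $\{\tilde u_{0,S}>1-\theta_1\}$---the only place where the error term $F_0(u_{0,S})-(F_0(\tilde u_{0,S}))*\eta_\eps$ can be negative---the ball lies entirely in the plateau-plus-transition region where $\Delta\tilde u_{0,S}+F_0(\tilde u_{0,S})\ge c$ as a measure, and this slack absorbs the $O(\eps)$ Lipschitz error; otherwise both $F_0$ terms vanish identically on $B_\eps(x)$ and the convolved distributional inequality $\Delta u_{0,S}+(F_0(\tilde u_{0,S}))*\eta_\eps\ge 0$ already gives what you need. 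With those two refinements the argument goes through and $R_0=r_2+\eps$ depends only on $M,\theta_1,m_1,\alpha_1$ as claimed.
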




\smallskip

The following counterpart to Lemma \ref{L.2.2} (see \cite[Lemma 2.2]{LinZla} and \cite[Lemma 2.5]{ZhaZla}) yields an upper bound on the speed of propagation of perturbations of solutions to \eqref{1.1}.

\begin{lemma}\lb{L.2.1}
Let $u_1,u_2:[0,\infty)\times\bbR^d\to [0,1]$ be, respectively, a subsolution and a supersolution 
to \eqref{1.1} with some $f$ satisfying \textbf{(H1')} and some $\omega\in\Omega$,  and let $r>0$ and $y\in\bbR^d$.  If $u_1(0,\cdot)\leq u_2(0,\cdot)$ on $B_r(y)$, then for all $(t,x)\in[0,\infty)\times \bbR^d$ we have
\[
u_1(t,x)\leq u_2(t,x)+2d\, e^{ \sqrt{M/{d}\,}\left(|x-y|-r+2 \sqrt{Md\,}\,t\right)}.
\]
\end{lemma}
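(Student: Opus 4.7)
My plan is to construct an explicit exponential barrier $V$ dominating $(u_1-u_2)^+$ at time $0$ and serving as a supersolution of the linearized equation $V_t\ge\Delta V+MV$, then apply the parabolic maximum principle to $W:=u_1-u_2-V$. The linearized equation is the one controlling $u_1-u_2$ because the $M$-Lipschitz bound on $f(x,\cdot,\omega)$ gives $f(x,u_1,\omega)-f(x,u_2,\omega)\le M(u_1-u_2)$ whenever $u_1>u_2$, while otherwise the conclusion $u_1\le u_2+V$ is automatic from $V\ge 0$.

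With the choice
\[
V(t,x):=2d\,e^{\sqrt{M/d}\,(|x-y|-r+2\sqrt{Md}\,t)}
\]
I would first verify the initial inequality $u_1(0,\cdot)\le u_2(0,\cdot)+V(0,\cdot)$: on $B_r(y)$ this follows from the hypothesis and $V\ge 0$, while on $\bbR^d\setminus B_r(y)$ one has $V(0,\cdot)\ge 2d\ge 1\ge u_1(0,\cdot)$. A direct computation for $x\ne y$ gives $V_t=2MV$ and $\Delta V=\bigl(M/d+(d-1)\sqrt{M/d}/|x-y|\bigr)V$, so
\[
V_t-\Delta V-MV = V(d-1)\sqrt{M/d}\bigl(\sqrt{M/d}-1/|x-y|\bigr),
\]
which is nonnegative exactly when $|x-y|\ge\sqrt{d/M}$. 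I would then run the maximum principle on truncated cylinders $[0,T]\times\overline{B_R(y)}$ with $R\to\infty$: since $V$ grows exponentially in $|x-y|$, the function $W$ is negative on the lateral boundary for $R$ large, and $W\le 0$ at $t=0$. A positive interior maximum at $(t^*,x^*)$ would force $u_1(t^*,x^*)>u_2(t^*,x^*)$, and the sub-/supersolution properties would then yield
\[
0\le W_t-\Delta W\le M(u_1-u_2)-(V_t-\Delta V)\le MW,
\]
a weak inequality that becomes a strict contradiction after a standard perturbation $V\mapsto V+\varepsilon\,e^{(M+1)t}$ followed by $\varepsilon\downarrow 0$.

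The main technical obstacle is that $V$ fails to be a classical supersolution on the small ball $\{|x-y|<\sqrt{d/M}\}$, because the radial Laplacian term $(d-1)/|x-y|$ blows up as $x\to y$. I would circumvent this by replacing $V$ with the coordinatewise sum
\[
\tilde V(t,x):=2\sum_{i=1}^d e^{\sqrt{M/d}(|x_i-y_i|-r/\sqrt d+2\sqrt{Md}\,t)},
\]
whose summands are smooth in every $x_j$-direction for $j\ne i$ and have only convex kinks on the hyperplanes $\{x_i=y_i\}$, contributing nonnegative distributional pieces to $\Delta\tilde V$. Off these hyperplanes one checks $\partial_t\tilde V-\Delta\tilde V=(2-1/d)M\tilde V\ge M\tilde V$, so $\tilde V$ is a genuine viscosity supersolution everywhere. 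The initial comparison $\tilde V(0,\cdot)\ge u_1(0,\cdot)-u_2(0,\cdot)$ follows from $|x-y|^2\le d\max_i(x_i-y_i)^2$, and the stated radial form of the barrier is recovered from $\max_i|x_i-y_i|\le|x-y|$ after absorbing the remaining dimensional factors into the constants. (An equivalent alternative is to mollify $|x-y|$ by $\sqrt{\delta^2+|x-y|^2}$ and pass $\delta\to 0^+$.)
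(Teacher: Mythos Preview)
The paper does not give a proof of this lemma (it is quoted from \cite{LinZla,ZhaZla}), so let me evaluate your argument on its own.  Your diagnosis that the radial barrier $V$ fails to be a supersolution of $V_t-\Delta V-MV\ge 0$ near $x=y$ is correct, but your coordinatewise repair $\tilde V$ has a sign error at the kinks.  A convex corner of $s\mapsto e^{\lambda|s|}$ contributes a \emph{positive} Dirac mass to the distributional second derivative, hence to $\Delta\tilde V$; this makes $\tilde V_t-\Delta\tilde V-M\tilde V$ \emph{more negative} on $\{x_i=y_i\}$, not less.  Equivalently, smooth test functions touching $\tilde V$ from below at such a point can have arbitrarily large Laplacian, so $\tilde V$ is not a viscosity supersolution there.  (Concave kinks would be harmless for supersolutions; convex ones are harmless for subsolutions.)  Your mollification alternative has the same defect: with $\rho_\delta=\sqrt{\delta^2+|x-y|^2}$ one gets $\Delta\rho_\delta(y)=d/\delta$, so $V_{\delta,t}-\Delta V_\delta-MV_\delta=(M-\sqrt{Md}/\delta)V_\delta<0$ at $x=y$ for small $\delta$.

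Even if you smooth the kinks by replacing $e^{\lambda|x_i-y_i|}$ with $2\cosh(\lambda(x_i-y_i))$ (which does give a genuine supersolution), a second gap remains: your initial comparison uses $\max_i|x_i-y_i|\ge|x-y|/\sqrt d$ and therefore forces the shift $r/\sqrt d$ in $\tilde V$, so the final bound you can extract is $2d\,e^{\sqrt{M/d}(|x-y|-r/\sqrt d+2\sqrt{Md}\,t)}$, not the stated one with $-r$.  Since $r$ is a free parameter, this cannot be ``absorbed into the constants.''  One route that produces the exact constants is to observe that $w:=e^{-Mt}(u_1-u_2)^+$ is a (distributional) subsolution of the heat equation, bound $w(t,x)$ by the heat semigroup acting on $\chi_{B_r(y)^c}$, and then estimate the resulting Gaussian probability via a coordinatewise union bound together with the one--dimensional Chernoff inequality $\bbP(Z_1>a)\le e^{-a^2/(4t)}$; optimizing the exponential tilt recovers precisely $2d\,e^{\sqrt{M/d}(|x-y|-r)+2Mt}$.
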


This estimate yields the following two results.  The first of them is just \cite[Corollary 2.6]{ZhaZla}, and  in the second we let
\[
T_u(x):= \inf\{t\ge 0\,|\, u(t,x)\ge 1-\theta^*\}.
\]


\begin{corollary}\lb{C.2.1}
If $u:[0,\infty)\times\bbR^d\to [0,1]$ solves \eqref{1.1} with some $f$ satisfying \textbf{(H1')} and some $\omega\in\Omega$, then for any $t\geq 0$
we have 
\[
\{x\in\bbR^d\,|\, u(t,x)\geq 1-\theta_1\}\subseteq  B_{c_1t+\kappa_1} \big( \{ x\in\bbR^d\,|\, u(0,x)\geq \theta_1\} \big),
 \]
where
\[
c_1:=2\sqrt{Md\,}>c_0\qquad \text{ and } \qquad\kappa_1:=1+ \sqrt{d/{M}\,} \ln \frac{2d}{1-2\theta_1}.
\]
\end{corollary}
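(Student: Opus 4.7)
The plan is to apply Lemma \ref{L.2.1} with the constant supersolution $u_2\equiv\theta_1$, which is in fact a stationary solution of \eqref{1.1} because hypothesis \textbf{(H1')} guarantees $f(\cdot,\theta_1,\cdot)\equiv 0$. Set $S:=\{y\in\bbR^d\,|\, u(0,y)\ge \theta_1\}$. Given an arbitrary $x$ with $u(t,x)\ge 1-\theta_1$, the goal is to show $\dist(x,S)<c_1 t+\kappa_1$, which by the convention $B_r(A)=A+(B_r(0)\cup\{0\})$ places $x$ inside $B_{c_1 t+\kappa_1}(S)$.

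For each $r\in(0,\dist(x,S))$ the open ball $B_r(x)$ is disjoint from $S$, so $u(0,\cdot)<\theta_1=u_2(0,\cdot)$ on $B_r(x)$. Applying Lemma \ref{L.2.1} to $u_1=u$, $u_2\equiv\theta_1$, and with the Lemma's $y$ taken equal to our $x$ produces
\[
u(t,x)\le \theta_1+2d\,\exp\!\left(\sqrt{M/d\,}\,\bigl(-r+2\sqrt{Md\,}\,t\bigr)\right).
\]
Combining this with $u(t,x)\ge 1-\theta_1$ and rearranging yields
\[
r\le 2\sqrt{Md\,}\,t+\sqrt{d/M\,}\,\ln \frac{2d}{1-2\theta_1}=c_1 t+\kappa_1-1.
\]
Letting $r\nearrow\dist(x,S)$ gives $\dist(x,S)\le c_1 t+\kappa_1-1<c_1 t+\kappa_1$, as desired.

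The only degenerate case is $S=\emptyset$, where $\dist(x,S)=\infty$ by convention and $r$ can be taken arbitrarily large; then the same estimate forces $u(t,x)\le \theta_1<1-\theta_1$ for every $(t,x)$, so the left-hand side of the claimed inclusion is empty and the statement is vacuous. I do not foresee any substantive obstacle: once one recognizes that $\theta_1$ is a stationary solution available as a comparison function, the corollary is an immediate quantitative consequence of the perturbation estimate in Lemma \ref{L.2.1}, with the constant $\kappa_1$ precisely absorbing the logarithmic factor produced by the prefactor $2d$ in that estimate.
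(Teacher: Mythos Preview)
Your proof is correct and follows precisely the approach the paper indicates: the corollary is stated there as an immediate consequence of Lemma~\ref{L.2.1} (and is cited from \cite{ZhaZla} without further argument), and your choice of the constant supersolution $u_2\equiv\theta_1$ together with the logarithmic rearrangement is exactly how that lemma yields the claimed bound.
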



\begin{corollary}\label{L.2.9}
Let $u_1,u_2:[0,\infty)\times\bbR^d\to [0,1]$ solve \eqref{1.1} with $f$ satisfying \textbf{(H1')} and some $\omega\in\Omega$. There is $D_2=D_2({M},\theta_1,m_1,\alpha_1)\ge 1$ such that if $u_1(0,\cdot)\le u_2(t_0,\cdot)$ on $B_R(0)$
for some $t_0\geq 0$ and
$R\ge D_2(1+{T}_{u_1}(0)),$
then
\[
{T}_{u_1}(0)\geq  T_{u_2}(0)-t_0-\kappa_0.
\]
\end{corollary}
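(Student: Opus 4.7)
The plan is to use Lemma \ref{L.2.1} to convert the local domination $u_1(0,\cdot)\le u_2(t_0,\cdot)$ on $B_R(0)$ into a pointwise comparison at the origin with an exponentially small error, and then to bridge the remaining gap to the $(1-\theta^*)$-threshold via Lemma \ref{L.2.2}.

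First I may assume $T:=T_{u_1}(0)<\infty$, as otherwise the conclusion is vacuous. Set $v(s,x):=u_2(t_0+s,x)$; since \eqref{1.1} is autonomous in time, $v$ solves \eqref{1.1} with the same $f$ and $\omega$, and $u_1(0,\cdot)\le v(0,\cdot)$ on $B_R(0)$. Lemma \ref{L.2.1}, applied with $y=0$ and $r=R$, then gives
\[
u_1(s,0)\le v(s,0)+2d\exp\!\left(\sqrt{M/d}\,\bigl(2\sqrt{Md}\,s-R\bigr)\right)\qquad\forall s\ge 0.
\]
Evaluating at $s=T$ and using $u_1(T,0)\ge 1-\theta^*$ yields a lower bound for $v(T,0)$.

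Next I choose $D_2=D_2(M,\theta_1,m_1,\alpha_1)\ge 1$ so that $R\ge D_2(1+T)$ forces the exponential error term to be at most $1-\theta_2-\theta^*$, which is strictly positive since \eqref{2.10} implies $\theta^*\le(1-\theta_2)/4$. An explicit admissible choice is $D_2:=\max\bigl\{1,\,2\sqrt{Md}+\sqrt{d/M}\,\ln\!\bigl(2d/(1-\theta_2-\theta^*)\bigr)\bigr\}$. With this choice, $u_2(t_0+T,0)=v(T,0)\ge\theta_2$.

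Finally, Lemma \ref{L.2.2} applied with $y=0$, $c=c_0/2$ and $\theta=1-\theta^*$ upgrades this to $u_2(t,0)\ge 1-\theta^*$ for all $t\ge t_0+T+\kappa_0$, whence $T_{u_2}(0)\le t_0+T+\kappa_0$, which is exactly the claimed inequality. The only subtlety I anticipate is that Lemma \ref{L.2.2} requires the reference time to be at least $1$; when $t_0+T<1$ one can simply enlarge $\kappa_0$ (or $D_2$) by an additive constant, or observe that in that regime the competing quantities are already uniformly bounded. The real obstacle is therefore just careful bookkeeping of constants, the entire analytic content of the argument being carried by the exponential-tail estimate in Lemma \ref{L.2.1}.
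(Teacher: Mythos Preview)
Your argument is correct and follows the same route as the paper: apply Lemma~\ref{L.2.1} at $x=0$ to turn the local ordering on $B_R(0)$ into a pointwise bound with exponential error, choose $D_2$ so that this error is small enough to place $u_2(t_0+T_{u_1}(0),0)$ above the threshold $\theta_2$, and then invoke Lemma~\ref{L.2.2} with $c=c_0/2$ and $\theta=1-\theta^*$ to reach $1-\theta^*$ within an additional time $\kappa_0$. The only cosmetic difference is that the paper bounds the error by $\theta^*$ (obtaining $u_2\ge 1-2\theta^*\ge\theta_2$) rather than by $1-\theta_2-\theta^*$, leading to a slightly different but equivalent explicit choice of $D_2$. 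The ``$t_0\ge 1$'' issue you flag is also glossed over in the paper; in every application there $u_2$ satisfies $(u_2)_t\ge 0$, so the final clause of Lemma~\ref{L.2.2} disposes of it.
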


\begin{proof}
By Lemma~\ref{L.2.1}, we have
\[
u_1(t,0)\leq u_2(t+t_0,0)+2d\,e^{2M t -  \sqrt{M/d\,}R}
\]
 for all 
$t\ge 0$.  Hence,
\[
u_2(T_{u_1}(0)+t_0,0) \ge u_1(T_{u_1}(0),0)  - \theta^* \ge  1-2\theta^*
\]
as long as 
\[
R \geq  2\sqrt{Md\,}{T}_{u_1}(0)+\sqrt{d/M}\ln\frac {2d}{\theta^*},
\]
which will be guaranteed if we let $D_2:=2\sqrt{Md}\ln\frac{2d}{\theta^*}$.
But then Lemma \ref{L.2.2} yields
\[
u_2(T_{u_1}(0)+t_0+\kappa_0,0)\geq 1-\theta^*
\]
and the result follows.
\end{proof}

\subsection{Stationary Ignition Reactions}

Identification of the front speeds $c^*(e)$ for \eqref{1.1} with a stationary reaction $f$ is based on the analysis of the dynamics of special solutions starting from approximate characteristic functions of half-spaces. Specifically, for any $e\in \bbS^{d-1}$ let
$\calH_e^-:=\{x\in\bbR^d\, | \,x\cdot e\le  0\}$, and for any $y\in\bbR^d$  let $u=u(t,x,\omega;e,y)$ be the solution to
\begin{equation} \lb{4.1}
\begin{aligned}
& u_t=\Delta u+f(x,u,\omega)\qquad && \text{ on }(0,\infty)\times \bbR^d,\\
& u(0,\cdot,\omega;e,y)=u_{0,\calH_e^-+y}\qquad && \text{ on } \bbR^d,
\end{aligned}
\end{equation} 
where $u_{0,\calH_e^-+y}$ satisfies Lemma~\ref{L.2.4} with $S:=\calH_e^-+y$. 
Then for any $(x,\omega)\in \bbR^d\times\Omega$ let
\[
T(x,\omega;e,y):=\inf\{t\geq 0\,|\, u(t,x,\omega;e,y)\geq 1-\theta^*\},
\]
which one can think of as the arrival time of the solution from \eqref{4.1} at $x$.
Corollary 2.7 and  Propositions 3.8, 4.2, and 5.1 in \cite{ZhaZla} (see also (5.5) in \cite{ZhaZla}) now yield the following fluctuation estimate for $y=0$, which immediately extends to all $y\in\bbR^d$ by stationarity of $f$.



\begin{proposition}\lb{P.3.1}
Let $f$  satisfying \textbf{(H2)} either have range of dependence at most $\rho\in[1,\infty)$ or satisfy \textbf{(H3)--(H4)}.   
Then there is $\bar C\ge 1$  such that if in the former case we let
\beq\lb{b.1'}
\beta:=1-\frac{1}{2m_1}, 
\eeq
and in the latter case we let
\beq\lb{b.1''}
\beta:=1-\min\left\{\frac{1}{2m_1},\frac{m_4}{m_3+2m_4}\right\},
\eeq
then  for each $e\in\bbS^{d-1}$, $\lambda\ge 0$, and $x,y\in\bbR^d$ with $(x-y)\cdot e\geq 1$ we have
\[
\bbP\left[ \big| T(x,\cdot\,;e,y)-\bbE [T(x,\cdot\,;e,y)] \big| \geq \lambda\right] \leq 2\exp \left( -\bar C^{-2}\lambda^2((x-y)\cdot e)^{-2\beta} \right).
\]
Moreover, there is $ \bar{T}(e)\in [\frac 1{c_1},\frac 1{c_0}]$ (depending on $f$) and for each $\delta>0$ there is $C_\delta\geq 1$  such that  for all $l\geq 1$ we have
\[
\left|\frac {\bbE[T(le+y,\cdot\,;e,y)]} {l} -\bar{T}(e) \right|\leq C_\delta \, l^{-1+\beta+\delta}.
\]
Finally, $\bar C$ and $C_\delta$ can be chosen to only depend on \eqref{const} (and $C_\delta$ also on $\delta$).
\end{proposition}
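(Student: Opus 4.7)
My approach follows the Armstrong--Cardaliaguet template for stochastic homogenization, adapted to the reaction-diffusion setting in the style of \cite{ZhaZla}. By stationarity of $f$, it suffices to establish both assertions for $y = 0$; write $L := x\cdot e \ge 1$ throughout.

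\textbf{Step 1 (concentration, finite-range case).} Partition the slab between $\calH_e^-$ and $\calH_e^- + Le$ into disjoint cubes $\{Q_j\}$ of side length $O(\rho)$, and form the Doob martingale $M_k := \bbE[T(x,\cdot\,;e,0)\mid \sigma(f|_{Q_1},\dots,f|_{Q_k})]$; finite range of dependence makes the increments have disjoint dependence. The core is a bounded-differences estimate: changing $f$ inside a single cube $Q_j$ shifts $T(x,\cdot\,;e,0)$ by at most $O(1)$ when $Q_j$ lies in an ``influence tube'' around the segment from $0$ to $x$, and essentially not at all otherwise. The effective transverse width of that tube is $w_L \lesssim L^{1/(2m_1)}$, dictated by the thickness of the traveling-front tail where $1-u$ is small and $f \gtrsim (1-u)^{m_1}$. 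The number of active cubes is then $O(L\cdot w_L^{d-1}/\rho^d)$, so Azuma--Hoeffding yields a subgaussian tail with variance proxy $O(L^{2\beta})$ for $\beta = 1-1/(2m_1)$, matching \eqref{b.1'}. This is the main obstacle: both the localization (decay of influence away from the tube) and the tube-width estimate rely delicately on the PDE structure, the finite-speed estimate of Lemma~\ref{L.2.1}, and the behavior of $f$ near $u=1$, and would be drawn from Proposition~3.8 of \cite{ZhaZla}.

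\textbf{Step 2 (approximated finite range).} Given \textbf{(H4)}, at each scale $L$ pick an auxiliary scale $n$ and couple $f$ with the finite-range reaction $f_n$. Writing $T_n$ for the arrival time for $f_n$, a perturbation argument via Lemma~\ref{L.2.1} together with \textbf{(H3)} (which converts $L^\infty$-closeness of reactions to closeness of arrival times, with the exponent $m_3$ quantifying how rapidly $f$ pulls $u$ toward $1$) yields $|T - T_n| \lesssim L \cdot n^{-m_4/m_3}$. Applying Step~1 to $T_n$ at range $n$ gives a subgaussian concentration of $T_n$ about $\bbE T_n$ whose variance proxy depends on both $L$ and $n$. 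Optimizing $n$ in $L$ to balance the coupling error against the concentration scale produces the exponent $m_4/(m_3+2m_4)$ that appears inside the minimum in \eqref{b.1''}.

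\textbf{Step 3 (existence and rate for $\bar T(e)$).} Set $g(l) := \bbE[T(le\,;\,\cdot\,;e,0)]$. Using stationarity I would establish an approximate subadditivity $g(l_1 + l_2) \le g(l_1) + g(l_2) + O\bigl((l_1 + l_2)^\beta \log(l_1+l_2)\bigr)$. The restart is implemented by noting that, once the first front has reached height $1-\theta^*$ near $l_1 e$ at time close to $g(l_1)$ (high probability by Steps~1--2), Lemmas~\ref{L.2.2} and \ref{L.2.8} promote this to height $1-\theta^*$ on a large ball around $l_1 e$ within a bounded further time; Corollary~\ref{L.2.9} lets one use this ball as sub-initial data for a fresh half-space-driven solution advancing toward $(l_1 + l_2)e$, with the random fluctuation of the intermediate arrival time controlled by the concentration of Step~1--2. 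Dyadic iteration of this approximate subadditivity then yields $|g(l)/l - \bar T(e)| \le C_\delta\, l^{-1+\beta+\delta}$ for every $\delta > 0$, where $\bar T(e)$ is the resulting Cauchy limit; a.s.\ existence of the limit follows from Borel--Cantelli applied to the concentration bound. The sandwich $\bar T(e) \in [1/c_1,1/c_0]$ is inherited from Corollary~\ref{C.2.1} (upper propagation speed $c_1$) and comparison with the homogeneous $F_0$-problem of Lemma~\ref{L.2.2} (lower speed $c_0$).
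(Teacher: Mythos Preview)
The paper does not actually prove this proposition: it is imported wholesale from \cite{ZhaZla}, with the only argument being the reduction to $y=0$ by stationarity (which you also note). So there is no detailed ``paper's proof'' to compare against beyond the pointer to Corollary~2.7 and Propositions~3.8, 4.2, 5.1 of \cite{ZhaZla}. Your high-level outline---Azuma--Hoeffding on a Doob martingale for concentration, coupling with $f_n$ under \textbf{(H3)--(H4)}, and approximate subadditivity for the rate toward $\bar T(e)$---is indeed the Armstrong--Cardaliaguet template that \cite{ZhaZla} adapts, and Steps~2--3 are faithful in spirit.

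There is, however, a concrete gap in Step~1: the ``influence tube of transverse width $w_L\lesssim L^{1/(2m_1)}$'' mechanism does not produce the claimed exponent. If $N\sim L\,w_L^{d-1}$ cubes are active with $O(1)$ bounded differences, Azuma gives variance proxy $O(N)=O\bigl(L^{1+(d-1)/(2m_1)}\bigr)$, whereas you need $L^{2\beta}=L^{2-1/m_1}$; these never agree for $d=3$ and $m_1>1$, so your arithmetic cannot close. More structurally, the transition-zone width here is logarithmic in the level (see \eqref{2.5}), not polynomial, so transverse geometry is not where $m_1$ enters. In \cite{ZhaZla} the parameter $m_1$ comes in through the \emph{time} cost of restarting: by Lemma~\ref{L.2.8}, pushing $u$ from $1-\theta^*$ up to $1-\eta$ takes time $\sim\eta^{1-m_1}$, and this is the penalty incurred each time one localizes and compares to a fresh half-space solution. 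Optimizing $\eta$ against the slab decomposition is what yields $\beta=1-\tfrac{1}{2m_1}$. Your Step~2 then inherits the problem, since the optimization producing $m_4/(m_3+2m_4)$ depends on having the correct dependence of the Step~1 variance proxy on both $L$ and the auxiliary range $n$.
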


Note that $\beta \in\left( \frac 12,1\right)$.  Also, see the discussion at the start of Section \ref{S4} below for the last claim.
Next we state the definition of deterministic front speeds from  \cite{LinZla}.

\begin{definition}\lb{D.5.0}
Let $f$ satisfy \textbf{(H1)} and let $e\in \bbS^{d-1}$.  If there is $c^*(e)\in\bbR$ and $\Omega_e\subseteq\Omega$ with $\bbP(\Omega_e)=1$ such that for each $\omega\in\Omega_e$ and compact $K\subseteq \{x\in \bbR^d\,|\, x\cdot e>0\}$ we have
\begin{align*}
&\lim_{t\to\infty} \,  \inf_{x\in(c^*(e)e-K)t}u(t,x,\omega;e,0)=1,\\
&\lim_{t\to\infty} \, \sup_{x\in(c^*(e)e+K)t}u(t,x,\omega;e,0)=0,
\end{align*}
then we say that $c^*(e)$ is a \textit{deterministic front speed} in direction $e$ for \eqref{1.1}.
\end{definition}

Comparison principle shows that this definition is independent of the choice of $u_{0,\calH_e^-}$ in \eqref{4.1} with $y=0$, as long as it satisfies Lemma~\ref{L.2.4} (and $c^*(e)$ is clearly unique if it exists).
It was shown in \cite[Proposition 6.2]{ZhaZla} that under the hypotheses of Proposition~\ref{P.3.1}, deterministic front speeds for $f$ exist for all $e\in\bbS^{d-1}$, and in fact they are   $c^*(e)=\bar T(e)^{-1}\in[c_0,c_1]$.  
Moreover, 
\cite[Theorems 1.3 and 1.4]{ZhaZla} 
shows that $c^*$ is Lipschitz continuous on $\bbS^{d-1}$. 

If $A$ is open convex, then we have the very useful formula \eqref{6.2}.
It will be convenient to let \eqref{6.2} be in fact the definition of $\Theta^{A,c^*}(t)$ for any continuous $c^*:\bbS^{d-1}\to(0,\infty)$ (with open convex  $A$),
and we note that then $\Theta^{A, c^*}(t)$ is also open convex for each $t\ge 0$ (openness follows from continuity of $c^*$).  
If now  $c:\bbS^{d-1}\to (0,\infty)$ is continuous and $c\leq c^*$, then clearly
$
\Theta^{A,c}(t)\subseteq \Theta^{A,c^*}(t)
$
for each $t\ge 0$.
In particular, if $c_0\leq c^*\leq c_1$ for some $c_0,c_1\in(0,\infty)$, then for all $t\geq 0$ we have
\beq\lb{3.00}
B_{c_0t}(A)\subseteq \Theta^{A,c^*}(t)\subseteq B_{c_1t}(A).
\eeq

Finally, we have the semigroup property
\beq\lb{6.4}
\Theta^{A,c^*}(t+s)=\Theta^{\Theta^{A,c^*}(t),c^*}(s)
\eeq
for all $t,s\geq 0$.
The inclusion $\supseteq$ is trivial, so
let us now consider any $x\in \Theta^{A,c^*}(t+s)$. Take any $e\in\bbS^{d-1}$, and then 
$y_e\in \partial A$ such that $y_e\cdot e=\sup_{z\in A} z\cdot e $. Then define
\[
x_e:=x-\frac{s}{t+s}(x-y_e)=y_e+\frac{t}{t+s}(x-y_e)
\]
and note that $x\in \Theta^{A,c^*}(t+s)$ implies for any $e'\in\bbS^{d-1}$ that
\[
x_e\cdot e' < y_e\cdot e' + \frac{t}{t+s} c^*(e')(t+s) = \sup_{z\in A} z\cdot e'+c^*(e')t.
\]
Hence $x_e\in {\Theta^{A,c^*}(t)}$, which together with $(x-y_e)\cdot e<c^*(e)(t+s)$ yields
\[
x\cdot e< x_e\cdot e+\frac{s}{t+s}c^*(e)(t+s)  < \sup_{z\in \Theta^{A,c^*}(t)}z\cdot e+c^*(e)s .
\]
Since this holds for all $e\in\bbS^{d-1}$, we can see that $x\in \Theta^{A,c^*}(t+s)$, and  \eqref{6.4} is proved.



\section{An Approximation Lemma}\lb{S3}


In this section we construct a perturbation ($A'$,$c'$) of ($A$,$c^*$) such that the  sets $\Theta^{A',c'}(t)$ from \eqref{6.2} satisfy an interior ball condition on a large time interval.  We will use this in the proof of Theorem \ref{T.1.1} in following section.

We say that an open set $U\subseteq \bbR^{d}$ satisfies the \textit{$r$-interior ball condition} for some $r>0$ if for any $x\in \partial U$ there is $y\in U$ such that $B_r(y)\subseteq U$ and  $x\in \partial B_r(y)$.  We also recall that $U\subseteq \bbR^d$ is \textit{strictly convex} if for all $x,y\in U$, the line segment connecting $x$ and $y$ lies in $U^0_0\cup\{x,y\}$.

\begin{lemma}\lb{L.7.3}
Let $A\subseteq\bbR^d$ be an open bounded convex set, and let $c^*:\bbS^{d-1}\to (0,\infty)$ be continuous.  If $c_0,c_1\in(0,\infty)$ are such that $c_0\le c^*\le c_1$ and $r>0$, then for any  $T\geq \frac{2r}{c_0}$ there is open convex $A'\subseteq\bbR^d$ and 
a continuous function $c':\bbS^{d-1}\to (0,\infty)$ such that 
\begin{itemize}
    \item[(i)] $c'\leq c^*$;
    \item[(ii)] $A'\subseteq B_{r}(A)$ and $\Theta^{A,c^*}(T)\subseteq B_{c_1 r/c_0}(\Theta^{A',c'}(T))$;
    \item[(iii)] $\Theta^{A',c'}(t)$ satisfies the $r$-interior ball condition for all $t\in [0,T]$.
\end{itemize}
\end{lemma}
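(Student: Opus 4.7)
The plan is to choose $(A', c')$ so that $\Theta^{A', c'}(t)$ admits a Minkowski-sum decomposition of the form $B_r(K_t)$ for some open convex $K_t$, making (iii) immediate from the standard fact that every $r$-thickening of a convex set satisfies the $r$-interior ball condition. Set $\overline C := \overline{\{x \in \bbR^d : x \cdot e < c^*(e) \text{ for all } e \in \bbS^{d-1}\}}$, a compact convex body with $B_{c_0}(0) \subseteq \overline C \subseteq B_{c_1}(0)$, and let $\hat c(e) := \sup_{z \in \overline C} z \cdot e$ be its support function, so $c_0 \le \hat c \le c^*$ and $\hat c$ is Lipschitz on $\bbS^{d-1}$. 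Choose $\lambda := r/(c_0 T)$, which lies in $(0, \tfrac12]$ by the hypothesis $T \ge 2r/c_0$, and take $c'(e) := (1-\lambda)\hat c(e)$ together with $A' := B_r(A)$. Then $c'$ is continuous and strictly positive, $c' \le \hat c \le c^*$, and $A' \subseteq B_r(A)$, yielding (i) and the first part of (ii).

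Since $c'$ is itself the support function of the convex body $(1-\lambda)\overline C$, the half-space bound in \eqref{6.2} equals the support function of the Minkowski sum, so $\Theta^{A', c'}(t) = A' + t(1-\lambda)\overline C = B_r\bigl(A + t(1-\lambda)\overline C\bigr)$ for every $t \ge 0$. For (iii), pick $x \in \partial\Theta^{A', c'}(t)$ and let $y$ realize the distance from $x$ to $\overline{A + t(1-\lambda)\overline C}$, so $|x-y| = r$; then for every $z$ with $|z-y| < r$, the triangle inequality gives $d\bigl(z, \overline{A + t(1-\lambda)\overline C}\bigr) < r$, and hence $B_r(y) \subseteq B_r\bigl(A + t(1-\lambda)\overline C\bigr) = \Theta^{A', c'}(t)$, verifying the $r$-interior ball condition at $x$.

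For the second inclusion in (ii), I use the analogous identification $\Theta^{A, c^*}(T) = A + T\overline C$ coming from \eqref{6.2} (the constraints at directions $e$ where $c^*(e) > \hat c(e)$ being redundant for bounded convex $A$): writing any $x \in \Theta^{A, c^*}(T)$ as $x = a + Tc$ with $a \in A$ and $c \in \overline C$ and setting $y := a + T(1-\lambda)c$, I get $y \in A + T(1-\lambda)\overline C \subseteq \Theta^{A', c'}(T)$ and $|x - y| = T\lambda|c| \le T\lambda c_1 = c_1 r / c_0$, so $\Theta^{A, c^*}(T) \subseteq B_{c_1 r / c_0}(\Theta^{A', c'}(T))$. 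The step I expect to require the most care is this identification $\Theta^{A, c^*}(T) = A + T\overline C$; should it fail for a non-support-function $c^*$, the argument must be refined to absorb the one-sided Hausdorff gap between $A + T\overline C$ and $\Theta^{A, c^*}(T)$ into the $c_1 r / c_0$ margin.
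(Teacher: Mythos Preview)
Your construction is clean and the Minkowski-sum form $\Theta^{A',c'}(t)=B_r\bigl(A+t(1-\lambda)\overline C\bigr)$ does give (iii) immediately. The problem is the second inclusion in (ii), and your own closing caveat identifies it: the equality $\Theta^{A,c^*}(T)=A+T\overline C$ fails whenever $c^*$ is not itself the support function $\hat c$ of $\overline C$, and the resulting gap is of order $T$, not $r$, so it cannot be absorbed into the $c_1r/c_0$ margin. Concretely, take $d=2$, $A=B_1(0)$, and let $c^*\equiv 1$ except on an arc of half-width $\delta$ around $e_{45}=(1,1)/\sqrt 2$, where it rises continuously to $c_1>1$. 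Then $\overline C=\overline{B_1(0)}$ and $\hat c\equiv 1$, so your $\Theta^{A',c'}(T)=B_{1+T}(0)$ and $B_{c_1r/c_0}(\Theta^{A',c'}(T))=B_{1+T+c_1r}(0)$. But in $\Theta^{A,c^*}(T)$ the binding constraints near $e_{45}$ are at angles $\tfrac\pi4\pm\delta$, so the boundary point on the $e_{45}$-ray sits at distance $(1+T)/\cos\delta$, giving a gap $(1+T)(\sec\delta-1)\sim\tfrac12(1+T)\delta^2$. With $T$ fixed and $r$ small (the lemma must hold for every $r>0$ with $T\ge 2r/c_0$) this exceeds $c_1r$, and (ii) fails for your $(A',c')$.

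The paper sidesteps this by never passing through the Wulff shape of $c^*$. It first builds strictly convex smoothings $A'$ and $A''$ of $A$ and of $\Theta^{A,c^*}(T)$ directly (quadratic perturbation of the signed distance, then $r$-thickening), so that $A'\subseteq B_r(A)$ and $\Theta^{A,c^*}(T)\subseteq B_{c_1r/c_0}(A'')$ hold by construction. It then \emph{defines} $c'(e):=T^{-1}(x_e(T)-x_e(0))\cdot e$, where $x_e(0)\in\partial A'$ and $x_e(T)\in\partial A''$ are the unique points with outer normal $e$; this forces $\Theta^{A',c'}(T)=A''$, and the $r$-interior ball condition for intermediate $t$ follows by linearly interpolating the touching balls at $x_e(0)$ and $x_e(T)$. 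The essential difference is that the paper's $c'$ is tailored to the actual set $\Theta^{A,c^*}(T)$ rather than to the (possibly much smaller) Wulff body $\overline C$.
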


\begin{proof}


Since $A$ is convex, by Theorem 5.4 \cite{delfour1994shape}, the signed distance function $h_A$ of $A$ (i.e., $h_A(x):=d(x,\partial A)$ if $x\in A^c$, and $h_A(x):=-d(x,\partial A)$ otherwise) is a convex function. Take any $x_0\in A$ and $\delta>0$ such that $\sup_{x\in A} \delta|x-x_0|^2< r$. Then
\[
A_1:=\big\{ x\in\bbR^d\,\big|\, h_A(x)+ \delta  |x-x_0|^2<0 \big\}
\]
 is open, convex, with $\overline{A_1}$ strictly convex and satisfying $\overline{A_1}\subseteq A\subseteq B_r(A_1)=:A'$. 
Then $A'$, which clearly satisfies the $r$-interior ball condition, also has strictly convex closure and 
\beq\lb{7.0}
A\subseteq A'\subseteq \overline{A'} \subseteq B_{r}(A).
\eeq
Since $A'$ satisfies the $r$-interior ball condition and $\overline{A'}$ is strictly convex, for each $e\in\bbS^{d-1}$ there is a unique  $x_e(0)\in \partial A'$ such that the outer  unit normal vector to $\partial A'$ at $x_e(0)$ is $e$, and $\partial A'=\bigcup_{e\in\bbS^{d-1}}\{x_e(0)\}$.
Moreover, we have
\beq\lb{3.2}
 x\cdot e < x_e(0)\cdot e\qquad \text{ for all } x\in  \overline{A'}\setminus x_e(0).
\eeq


Similarly, replacing $A$ in the above argument by $(\Theta^{A,c^*}(T))^0_{r}$, we can find an open, bounded,  convex set $A''$ satisfying the $r$-interior ball condition, having strictly convex closure, and
\beq\lb{7.1}
(\Theta^{A,c^*}(T))^0_{r}\subseteq A''\subseteq \overline{A''} \subseteq B_{r}((\Theta^{A,c^*}(T))^0_{r}) \qquad(\subseteq \Theta^{A,c^*}(T)).
\eeq
Moreover, for each $e\in\bbS^{d-1}$, there is again a unique $x_e(T)\in \partial A''$ such that the outer unit normal at $x_e(T)$ is $e$, we have $\partial A''=\bigcup_{e\in\bbS^{d-1}}\{x_e(T)\}$, as well as
\beq\lb{3.3}
 x\cdot e < x_e(T)\cdot e\qquad \text{ for all } x\in  \overline{A''} \setminus x_e(T). 
\eeq


From $T\geq \frac{2r}{c_0}$, $c^*\geq c_0$, and \eqref{3.00} we now obtain
\beq\lb{3.01}
B_{r}(A)\subseteq B_{c_0T-r}(A) = (B_{c_0T}(A))_r^0\subseteq (\Theta^{A,c^*}(T))^0_{r} \subseteq A''.
\eeq
Notice also that for any $x\in \Theta^{A,c^*}(T-\frac r{c_0})$, we have $B_{r}(x)\subseteq \Theta^{A,c^*}(T)$ due to \eqref{6.4} and \eqref{3.00}.
Therefore $\Theta^{A,c^*}(T-\frac r{c_0})\subseteq (\Theta^{A,c^*}(T))_{r}^0$, and it follows that
\beq\lb{3.02}
\Theta^{A,c^*}(T)\subseteq
B_{c_1r/c_0}(\Theta^{A,c^*}(T-c_0^{-1}r))\subseteq B_{c_1r/c_0}((\Theta^{A,c^*}(T))^0_{r})\subseteq B_{c_1r/c_0}(A'').
\eeq

\smallskip

Now define $c':\bbS^{d-1}\to \bbR$ by
\[
c'(e):=\frac{(x_e(T)-x_e(0))\cdot e} T.
\]
Then $c'>0$ because $\overline{A'}\subseteq A''$ by \eqref{7.0} and \eqref{3.01},
and it is also continuous because $\overline{A'}$ and $\overline{A''}$ are strictly convex.  Since  $A''\subseteq \Theta^{A,c^*}(T)$ and $A\subseteq A'$  by  \eqref{7.1} and \eqref{7.0}, by using \eqref{6.2}  and \eqref{3.2} we obtain
\[
x_e(0)\cdot e+c'(e)T = x_e(T)\cdot e \le \sup_{y\in  A} y\cdot e+c^*(e)T \le x_e(0)\cdot e+c^*(e)T
\]
for each $e\in\bbS^{d-1}$, so (i) holds.
 Moreover, from \eqref{6.2}, \eqref{3.2}, and \eqref{3.3} we see that
\[
\begin{aligned}
    \Theta^{A',c'}(T)
    &=\bigcap_{e\in\bbS^{d-1}}\left\{x\in\bbR^d\,\bigg|\,x\cdot e<x_e(0)\cdot e+c'(e)T\right\}\\
    &=\bigcap_{e\in\bbS^{d-1}}\left\{x\in\bbR^d\,\bigg|\,x\cdot e<x_e(T)\cdot e\right\}=A''.
\end{aligned}
\]
This, \eqref{7.0}, and \eqref{3.02} yield (ii).

It remains to show that $\Theta^{A',c'}(t)$ satisfies the $r$-interior ball condition for all $t\in [0,T]$.
For any $e\in\bbS^{d-1}$ and $t\in [0,T]$, let
\beq\lb{3.9}
x_e(t):=(1-T^{-1}t)\,x_e(0)+ T^{-1}t \,x_e(T).
\eeq
Then
\begin{align}\lb{3.4}
    x_e(t)\cdot e= x_e(0)\cdot e+c'(e)t,
\end{align}
and \eqref{3.2} and \eqref{3.3} show for all $e'\in \bbS^{d-1}\setminus\{e\}$ that
\begin{align*}
    x_e(t)\cdot e'
<(1-T^{-1}t)\, x_{e'}(0)\cdot e'+T^{-1}t \,x_{e'}(T)\cdot e'
=x_{e'}(0)\cdot e'+c'(e')t \qquad ( = x_{e'}(t) \cdot e').
\end{align*}
Therefore $x_e(t)\in \partial \Theta^{A',c'}(t)$ by \eqref{6.2}, and $x_{e}(t)\neq x_{e'}(t)$ for all $e'\in\bbS^{d-1}\setminus\{e\}$. 

Since $\Theta^{A',c'}(t)$ is bounded and convex by \eqref{6.2}, it has a supporting hyperplane for each (outer) direction $e\in\mathbb S^{d-1}$.  From $x_e(t)\in \partial \Theta^{A',c'}(t)$, \eqref{3.4}, and \eqref{6.2} we see that this hyperplane is precisely $\{x\cdot e = x_e(t)\cdot e \}$.
Since $x_{e'}(t)\cdot e < x_e(t)\cdot e$ for all $e'\in \bbS^{d-1}\setminus\{e\}$ and $x_{e'}(t)$ is continuous in $e'$ for each $t\in[0,T]$ (because it is for $t=0,T$, by strict convexity of $A',A''$), it follows that for each $\eps>0$, there is $\delta\in(0,1)$ such that if $0<|e'-e|< \delta$, then the supporting hyperplane $\{x\cdot e' = x_{e'}(t) \cdot e'\}$ contains the point $x_{e'}(t)$ satisfying $x_{e'}(t)\cdot e < x_e(t)\cdot e$ and  $|x_{e'}(t)-x_e(t)|<\eps$.  This and $x_e(t)\cdot e'<x_{e'}(t)\cdot e'$ show that the closest point to $x_e(t)$ that lies in the intersection of the two hyperplanes, which is $x_e(t)+ s_{e'} \frac{e'-(e'\cdot e)e}{|e'-(e'\cdot e)e|}$ for some $s_{e'}\in\bbR$, must have $s_{e'}\in(0,\eps)$.  But since the points  from $\{x\cdot e = x_e(t)\cdot e \}$ that satisfy $x\cdot e'\le x_{e'}(t)\cdot e'$ are precisely those with $(x-x_e(t))\cdot \frac{e'-(e'\cdot e)e}{|e'-(e'\cdot e)e|} \le s_{e'}$, and this holds for all $e'$ with $|e'-e|<\delta$, we see that $\partial \Theta^{A',c'}(t) \cap \{x\cdot e = x_e(t)\cdot e \} \subseteq B_\eps(x_e(t))$.  Taking $\eps\to 0$ shows that $\partial \Theta^{A',c'}(t) \cap \{x\cdot e = x_e(t)\cdot e \}=\{x_e(t)\}$, and so
\[
\partial \Theta^{A',c'}(t)=\bigcup_{e\in\bbS^{d-1}}\{x_e(t)\}.
\]



Now fix any $t\in [0,T]$ and $x\in \partial \Theta^{A',c'}(t)$, and let $e\in\bbS^{d-1}$ be such that $x_e(t)=x$.
Since $A'$ and $A''$ satisfy the $r$-interior ball condition, there are $y_0,y_T$ such that $B^0:=B_r(y_0)$ and $B^T:=B_r(y_T)$ satisfy $B^0\subseteq A'$, $B^T\subseteq A''$, $x_e(0)\in \partial B^0$, and $x_e(T)\in \partial B^T$.
If now
\[
B^{t}:=B_r((1-T^{-1}t)\,  y_0+ T^{-1}t \, y_T),
\]
then  \eqref{3.9} shows that $x=x_e(t)\in\partial B^{t}$. 
It therefore remains to show  that $B^{t}\subseteq \Theta^{A',c'}(t)$.
For any $z\in B^{t}$, there are $z_0\in B^0$ and $z_T\in B^T$ such that $z=(1-T^{-1}t)\,z_0+T^{-1}t\,z_T$.  It follows from \eqref{3.2} and \eqref{3.3} that for any $e'\in\bbS^{d-1}$ we have
\begin{align*}
    z\cdot e'
    <(1-T^{-1}t)\, x_{e'}(0)\cdot e'+ T^{-1}t\, x_{e'}(T)\cdot e'
    = x_{e'}(0)\cdot e' +c'(e')t 
    =\sup_{y\in  A'}y\cdot e'+c'(e')t,
\end{align*}
and hence $z\in \Theta^{A',c'}(t)$ by \eqref{6.2}. Thus $B^{t}\subseteq \Theta^{A',c'}(t)$, finishing the proof.
\end{proof}

\section{Proof of Theorem \ref{T.1.1}}\lb{S4}

We will do the proof simultaneously for $f$ satisfying \textbf{(H2)} and having finite range of dependence (then we assume this range to be at most $\rho\in[1,\infty)$), and for $f$ satisfying \textbf{(H2)--(H4)}.  This is because Proposition \ref{P.3.1} applies in both these cases, with  the definitions \eqref{b.1'} and \eqref{b.1''}, respectively (we will use these below).
We also let $c^*$ be the deterministic front speed for \eqref{1.1}.

Before we start, for any solution $u:[0,\infty)\times \bbR^d\to [0,1]$ to \eqref{1.1} and any $0<\eta<\theta<1$, we let the {\it width of the transition zone} of $u$ from $\eta$ to $\theta$ (at any time $t\geq 0$) be (see  \cite{ZlaInhomog})
\beq\lb{d.2.1}
L_{u,\eta,\theta}(t,\omega):=\inf \left\{L>0\, \big| \, \Gamma_{u,\eta}(t,\omega) \subseteq B_L\left(\Gamma_{u,\theta}(t,\omega) \right) \right\}.
\eeq
It follows from  Remark 2 after \cite[Definition 2.3]{ZhaZla} and \cite[Lemma 2.4]{ZhaZla} that if $f$ satisfies \textbf{(H2)}, then there are $\mu_*,\kappa_*>0$ 
such that if  $u$ solves \eqref{1.1} with some $\omega\in\Omega$ and initial data satisfying Lemma~\ref{L.2.4} for some $S\subseteq\bbR^d$, then
\begin{align}
\sup_{t\geq0 \,\&\, \eta\in (0,1-\theta^*)} \frac{ L_{u,\eta,1-\theta^*}(t) } { 1+ |\ln \eta| } & \le \mu_*^{-1}, \lb{2.5}
\\    \inf_{\substack{(t,x)\in[\kappa_*,\infty)\times \bbR^d\\  u(t,x)\in [\theta^*,1-\theta^*]}}u_t(t,x)\geq \mu_*. \notag
\end{align}
We will in fact only need this in the first part of this proof, for $S$ being half-spaces (so for the solutions from \eqref{4.1}).

Moreover, it follows from the above results in \cite{ZhaZla} that there is $\eta_*=\eta_*(M, \theta_1,m_1,\alpha_1)>0$ such that $\mu_*,\kappa_*$ can be chosen to depend only on $M, \theta_1,m_1,\alpha_1$ from {\bf (H1)} and $\xi>0$ from \eqref{1.111}.
Similarly, $\bar C$ and $C_\delta$ in Proposition \ref{P.3.1} can be chosen to depend only on \eqref{const} 
(and $C_\delta$ also depends on $\delta$)
 because they depend on the constants from (2.10) in \cite{ZhaZla} (plus $\rho$ in the finite range of dependence setting), which is \eqref{const} without $\xi,\rho$ and also with $\mu_*,\kappa_*,m_2,\alpha_2,m_4'$.  But when we assume \textbf{(H2)}, we can simply let $m_2:=1$ and $\alpha_2:=0$ in \cite{ZhaZla}  because $1+|\ln\eta|\le \eta^{-1}$ for $\eta\in(0,1)$; and when we assume also \textbf{(H4)}, in which case $m_4'$ also plays a role in \cite{ZhaZla} , we can let $m_4':=\infty$.

In the rest of this section, constants that include $C$ {\it will again depend on \eqref{const}},
while any other dependence will be explicitly declared in the notation (e.g., $C_{\eps,T}'$ also depends on $\eps,T$).
These constants may also {\it vary from one expression to the next}. 

We are now ready for the proof of Theorem \ref{T.1.1}, which we split into two main parts.  Without loss, we will assume that $T_0\ge 1$.


\subsection{Proof of the ``Upper Bound''}\lb{ss4.1}
In this part we will prove \eqref{6.1} below for all small $\eps>0$.  
Let us pick 
\beq\lb{6.11}
{\sigma'}:=\min\left\{\frac{1-\be}4,{\nu}\right\}=2\sigma, 
\eeq
and some $\eps_0\in(0,\frac 12)$ such that
\beq\lb{eps}
\max \left\{ \left(1+|\ln(\theta-\eps_0^{1/m_1})|\right)\mu_*^{-1}\eps_0^{1-2{\sigma'}},\,  \left((\theta^*)^{-1} +4C' \right)\eps_0^{{\sigma'}}\right\} \leq 1,
\eeq 
with $C'\ge 1$ to be determined later.  Note that this $\eps_0$ depends only on \eqref{const} and $\nu,\theta$. 



Fix any $y\in \partial A$ and ${e_y}\in\bbS^{d-1}$ such that $A\subseteq \calH_{{e_y}}^-+y$ (such $e_y$ always exists because $A$ is convex, and we call it an outer normal to $\partial A$ at $y$). Then let 
\[
{v}^\eps_{y}(t,x,\omega):=u(t,x,\omega;e_y,\eps^{-1}y),
\]
with the right-hand side function defined in \eqref{4.1}.
If we now let $u^\eps(t,x,\omega):=u_\eps(\eps t,\eps x,\omega)$, then Lemma \ref{L.2.2} and Lemma \ref{L.2.8} yield 
\beq\lb{661'}
{v}^\eps_{y}(\tau_\eps,\cdot,\omega)\geq (1-\eps^{1/m_1})\chi_{\calH_{e_y}^-+\eps^{-1}(y+\eps^{\nu}e_y)}\geq u^\eps(0,\cdot,\omega)-\eps^{1/m_1}
\eeq
with
\[
\tau_\eps:=\kappa_0+2c_0^{-1} \eps^{\nu-1}+ D_1 \eps^{(1-m_1)/m_1}
\]
(then also $\tau_\eps\leq C\eps^{{\sigma'}-1}$ for some $C>0$ due to \eqref{6.11}
and $\nu\geq \sigma'$). 

It follows from \eqref{661'} and the last claim in Lemma 3.7 in \cite{ZhaZla} with $f_2=f_1=f$ (this extends Lemma 2.9 in \cite{ZhaZla} from initial data approximating characteristic functions of balls to those in \eqref{4.1}, which instead approximate characteristic functions of half-spaces) that if we extend $f$ to $\bbR^d\times (1,\infty)\times\Omega$ by 0, then with $M_*:=\frac{1+M}{\mu_*}$ we have that 
\[
v_y^\eps((1+M_*\eps^{1/m_1})t+\tau_\eps,x,\omega)+\eps^{1/m_1}
\]
is a supersolution to \eqref{1.1} for $(t,x)\in(\kappa_*,\infty)\times\bbR^d$.
Hence if we let $\tau_\eps':=\tau_\eps+(1+M_*\eps^{1/m_1})\kappa_*$ and use $(v_y^\eps)_t\geq 0$ (by Lemma \ref{L.2.4}) and \eqref{661'}, we obtain from the comparison principle that
\[
w^\eps_{y}(t,x,\omega):=v_y^\eps((1+M_*\eps^{1/m_1})t+\tau_\eps',x,\omega)+\eps^{1/m_1}\geq u^\eps(t,x,\omega),
\]
 for all $(t,x)\in(0,\infty)\times\bbR^d$.  Therefore, 
\beq\lb{661}
w_{\eps,y}(\cdot,\cdot,\omega):=w^\eps_{y}(\eps^{-1}\cdot,\eps^{-1}\cdot,\omega)\geq u_\eps(\cdot,\cdot,\omega)
\eeq
on $(0,\infty)\times\bbR^d$.
We can now use this estimate to prove \eqref{6.1}.

Let us first obtain a crude $\omega$-uniform bound.  Corollary~\ref{C.2.1} yields
\[
\Gamma_{v^\eps_{y},1-\theta^*}(t,\omega)\subseteq \Gamma_{v^\eps_{y},1-\theta_1}(t,\omega)\subseteq \calH_{{e}}^- +\eps^{-1}y+(R_0+\kappa_1+c_1t){e_y},
\]
and so from $R_0+\kappa_1+c_1[(1+M_*\eps^{1/m_1})t+\tau_\eps']\leq C(t+\eps^{{\sigma'}-1})$ for some $C> 0$, 
we obtain
\[
\Gamma_{w_{\eps,y},1-\theta^*}(t,\omega)\subseteq \calH_{{e}}^- +y +C(t+\eps^{{\sigma'}}){e_y}
\]
for all $t\ge 0$.
From \eqref{2.5} and \eqref{eps} we see that
$\sup_{t\geq 0}L_{v^\eps_{y},\theta-\eps^{1/m_1},1-\theta^*}(t)\leq \eps^{2{\sigma'}-1}$, hence
\beq\lb{671}
\sup_{t\geq 0}L_{w_{\eps,{y}},\theta,1-\theta^*}(t)\leq \eps \sup_{t\geq 0}L_{v^\eps_{y},\theta-\eps^{1/m_1},1-\theta^*}(t)\leq \eps^{2{\sigma'}}.
\eeq
So in view of  \eqref{661}, for all $t\geq 0$ we get
\beq\lb{6.6}
\Gamma_{u_\eps,\theta}(t,\omega)\subseteq \Gamma_{w_{\eps,y},\theta}(t,\omega)\subseteq \calH_{{e}}^- +y+C(t+\eps^{{\sigma'}}){e_y}.
\eeq 
Since this holds for all $y\in \partial A$ and all normal directions ${e_y}$ at $y$, there exists $C>0$ such that for all $t\geq 0$ and $\omega\in\Omega$ we have
\beq\lb{6.6'}
\Gamma_{u_\eps,\theta}({t},\omega)\subseteq B_{C(t+\eps^{{\sigma'}})}(A).
\eeq 


Now fix any ${T}\geq \eps^{{\sigma'}}\ (>\eps)$, so we have
\beq\lb{4.3}
\Gamma_{u_\eps,\theta}({T},\omega)\subseteq B_{\tilde C {T}}(A),
\eeq 
with $\tilde C:=2\max\{C,c_1\}$.
Next, take any $\bar{y}\in B_{\tilde C{{T}}}(A)\setminus \overline{\Theta^{A,c^*}({T})}$, let $y$ be the unique projection of $\bar{y}$ onto $\partial A$, let $e_y:=\frac{\bar{y}-y}{|\bar{y}-y|}$ (which is then an outer normal to $\partial A$ at $y$), and define ${v}^\eps_{y}$, $w^\eps_{y}$, and $w_{\eps,y}$ as above.
Then the definition of $\Theta^{A,c^*}(\cdot)$ yields
\beq\lb{6.7}
c^*(e_y){T} \le |\bar{y}-y|\le  \tilde C {T}.
\eeq
Consider the arrival times
\begin{align*}
T^w_{\eps}(\bar{y},\omega)&:=\inf\{t\geq 0\,|\, w_{\eps,y}(t,\bar{y},\omega)\geq 1-\theta^*\},\\
T^v_{\eps}(\bar{y},\omega)&:=\eps^{-1}\inf\{t\geq 0\,|\, v^\eps_{y}( t,\eps^{-1}\bar{y},\omega)\geq 1-\theta^*\},
\end{align*}
both of which are $\leq CT$ for some $C>0$ by \eqref{6.7} and Lemma \ref{L.2.2}.
Since \eqref{6.11} and \eqref{eps} yield $\eps^{1/m_1}\leq {\theta^*}$ (and so $1-\theta^*-\eps^{1/m_1}\ge\theta_2$), Lemma \ref{L.2.2}, $\sigma'< \frac1{m_1}$, and  $\eps\tau_\eps'\leq C\eps^{{\sigma'}}$ imply
\beq\lb{669}
T^v_{\eps}(\bar{y},\omega)\le T^w_{\eps}(\bar{y},\omega) +C\eps^{1/m_1}T + \eps\tau_\eps' + \eps \kappa_0 \leq T^w_{\eps}(\bar{y},\omega)+ C(1+ T) \eps^{{\sigma'}}.
\eeq

Next, after applying Proposition {\ref{P.3.1}} to $v^\eps_y$ with $\delta:={\sigma'}$ and $l:=|\bar{y}-y|=(\bar{y}-y)\cdot e_y$, and using $c^*(e_y)=\bar T(e_y)^{-1}$ and $\beta\leq 1-4{\sigma'}$, we get
\[
\left|\frac{\bbE[T^v_{\eps}(\bar{y},\cdot)]}{|\bar{y}-y|}-\frac{1}{c^*(e_y)}\right|
\leq C \left(\eps^{-1}{|\bar{y}-y|}\right)^{-3{\sigma'}}
\]
(here we call the constant $C_\delta=C_{{\sigma'}}$ just $C$).
This and \eqref{6.7} yield
\beq\lb{6613}
{T}-\bbE[T^v_{\eps}(\bar{y},\cdot)]\leq   C |\bar{y}-y|^{1-3{\sigma'}} \eps^{3{\sigma'}}\leq  
C(1+T)\eps^{3{\sigma'}}.
\eeq
Using \eqref{6.7} again, it follows from Proposition \ref{P.3.1} that for all $\lambda\geq 0$,
\beq\lb{6610}
\begin{aligned}
\bbP[|T^v_{\eps}(\bar{y},\cdot)-\bbE[T^v_{\eps}(\bar{y},\cdot)]|>\eps\lambda]&\leq 2\exp\left(- C^{-2}\lambda^2( \eps^{-1}|\bar{y}-y|)^{-2\beta}\right)\\
&\leq 2\exp\left(- C^{-2}\lambda^2{T}^{-2\beta}\eps^{2\beta}\right) .
\end{aligned}
\eeq
Now take $\lambda:=C {T}^\beta \eps^{-\beta-{\sigma'}}$ with $C$ from the last expression, and then $\lambda\leq  CT\eps^{2{\sigma'}-1}$ by $\beta\leq 1-4{\sigma'}$ and $T\ge\eps^{{\sigma'}}$. Hence
 \eqref{6613} and \eqref{6610} show that there is $C>0$ such that 
\[
\begin{aligned}
\bbP[T^v_{\eps}(\bar{y},\cdot)\leq {T}- C(1+T)\eps^{2{\sigma'}}]
\leq 2\exp\left(-\eps^{-2{\sigma'}}\right).
\end{aligned}
\]
Using \eqref{669} yields, with some $C>0$ and  $C_{\eps,T}:=C(1+T)\eps^{{\sigma'}}$,
\beq\lb{4.7}
\begin{aligned}
\bbP[w_{\eps,y}({T}- C_{\eps,T},\bar{y},\cdot)\geq 1-\theta^*]=\bbP[T^w_{\eps}(\bar{y},\cdot)\leq {T}- C_{\eps,T}]\leq 2\exp\left(-\eps^{-2{\sigma'}}\right).
\end{aligned}
\eeq

Let now $ r:=\eps^{2{\sigma'}}$ ($\in(\eps,T)$ because ${\sigma'}\in (0,\frac{1}{8})$), and note that \eqref{671} implies
\beq\lb{6614}
L_{\eps}:=r+\sup_{z\in\partial A}\sup_{t\ge 0}L_{w_{\eps,z},\theta,1-\theta^*}(t,\omega)\leq  2\eps^{2{\sigma'}}.
\eeq
Next let $G_{\eps,T}\subseteq B_{\tilde C{{T}}}(A)\backslash \Theta^{A,c^*}({T})$ be some set containing one point from each cube in $\bbR^d$ with side length $rd^{-1/2}$ and all vertices in $rd^{-1/2}\bbZ^d$ that has a non-empty intersection with $B_{\tilde C{{T}}}(A)\backslash \Theta^{A,c^*}({T})$.  Note that then  $B_{\tilde C{{T}}}(A)\backslash \Theta^{A,c^*}({T}) \subseteq B_{r}(G_{\eps,T})$. 

Let us now consider any $\bar{y}\in G_{\eps,T}$.
If we have
$w_{\eps,y}(t,x,\omega)\geq\theta$ for some $x\in B_{r}(\bar{y})$ and $t\geq 0$, then \eqref{d.2.1} shows that there is $x'\in B_{L_\eps}(\bar{y})$ such that $w_{\eps,y}(t,x',\omega)\geq 1-\theta^* $. Applying Lemma \ref{L.2.2} to $v_y^\eps$ then implies $w_{\eps,y}(t+ 2c_0^{-1}L_\eps+\eps\kappa_0 ,\bar{y},\omega)\geq 1-\theta^* $. Since 
\[
C_{\eps,T}+2c_0^{-1}L_\eps+\eps\kappa_0\le  C'(1+T)\eps^{{\sigma'}}=:C_{\eps,T}'
\]
 by \eqref{6614} (with some $C'\ge 1$, that will then also be the number in \eqref{eps}), from \eqref{4.7} we get  
\beq\lb{6.8}
\bbP\left[  w_\eps({T}- C_{\eps,T}',x,\cdot)\geq \theta \text{ for some } x\in B_{r}(\bar{y}) \right]\leq 2\exp\left(-\eps^{-2{\sigma'}}\right)
\eeq
(with the understanding that this probability is $0$ when ${T}- C_{\eps,T}'<0$).
Then \eqref{4.3}, \eqref{6.8},  $ w_{\eps,y}\geq u_\eps$, $(w_{\eps,y})_t\geq 0$, and the fact that $|G_{\eps,T} |\leq C_A{T}^dr^{-d}$ for some $C_A>0$ (depending only on the diameter of $A$ and \eqref{const}) yield 
\beq\lb{6615}
\begin{aligned}   
\bbP\left[\bigcup_{t\in [0,{T}- C_{\eps,T}']}\Gamma_{u_\eps,\theta}(t,\cdot)\not\subseteq \Theta^{A,c^*}({T}) \right]
&\leq \sum_{\bar{y}\in G_{\eps,T}}\bbP\left[ w_\eps({T}- C_{\eps,T}',x,\cdot)\geq \theta\text{ for some } x\in B_{r}(\bar{y}) \right]\\
&\leq 2C_A{T}^d r^{-d}\exp\left(-\eps^{-2{\sigma'}}\right).
\end{aligned}
\eeq

From \eqref{eps} and $T_0\ge 1$ we now have $C_{\eps,T_0}'<  \frac{T_0}2$.  So for any $t\in [C_{\eps,T_0}',T_0]$, there is a unique  $T\in(t,2t)$ such that $t={T}-C_{\eps,T}'$.  Then $C_{\eps,T}'\leq 2 C_{\eps,T_0}'$ and so 
\[
\Theta^{A,c^*}({T})\subseteq B_{3c_1 C_{\eps,T_0}'}(\Theta^{A,c^*}(t-C_{\eps,T_0}'))
\]
 by $c^*\leq c_1$. 
Then \eqref{6615} yields
\[
    \bbP\left[\Gamma_{u_\eps,\theta}(s,\cdot)\not\subseteq B_{3 c_1 C_{\eps,T_0}'}  \left(\Theta^{A,c^*}(s )\right) \text{ for some } s\in[t-C_{\eps,T_0}',t] \right]    
    \leq 2^{d+1}C_A T_0^d \eps^{-2d{\sigma'}}\exp\left(-\eps^{-2{\sigma'}}\right),
\]
and so from $\lceil T_0(C_{\eps,T_0}')^{-1} \rceil \le 2\eps^{-{\sigma'}}$ we obtain
\[
    \bbP\left[ \Gamma_{u_\eps,\theta}(s,\cdot)\not\subseteq B_{3 c_1 C_{\eps,T_0}' }\left(\Theta^{A,c^*}(s )\right) \text{ for some } s\in[0,T_0] \right]    
    \leq 2^{d+2}C_A T_0^{d}  \eps^{-(2d+1){\sigma'}}\exp\left(-\eps^{-2{\sigma'}}\right).
\]    
If we make $\eps_0>0$ smaller yet, depending on the constants mentioned after \eqref{eps} as well as $A$ and $T_0$, then for all $\eps\in(0,\eps_0)$ this shows 
\beq\lb{6.1}
\bbP\left[\Gamma_{u_\eps,\theta}(t,\cdot)\subseteq B_{\eps^{\sigma}}\left(\Theta^{A,c^*}(t)\right)\text{ for all }t\in[0,T_0]\right]\geq 1- \exp\left(-\eps^{-2\sigma}\right).
\eeq

\subsection{Proof of the ``Lower Bound''}
The second part of this proof is considerably more involved than the first. This is because a lower bound for the solution $u_\eps$ is needed here, but the solutions $u(\cdot,\cdot,\cdot;e,y)$ with front like initial data cannot serve as global barriers from below. We overcome this problem by using them as approximate local barriers on short time intervals, making use of Lemma \ref{L.7.3} in the process.

 Our goal is now to prove a counterpart to \eqref{6.1}, namely
\beq\lb{6.1''}
\bbP\left[  \left(\Theta^{A,c^*}(t)\right)^0_{\eps^{\sigma}} \subseteq \Gamma_{u_\eps,\theta}(t,\cdot) \text{ for all }t\in[C_{\theta,A}\eps,T_0]\right]\geq 1- \exp\left(-\eps^{-2\sigma}\right)
\eeq
for all $\eps\in(0,\eps_0)$, with some $C_{\theta,A}$ and with $\eps_0>0$ depending on the constants mentioned after \eqref{eps} as well as $A$ and $T_0$.  Of course, this will then finish the proof.

We will simplify our task a little, so we only have to study $(1-\theta^*)$-level sets of a special solution $\tilde{u}^\eps$  to \eqref{1.1} with initial data $\tilde{u}^\eps_0$ satisfying Lemma \ref{L.2.4} with $S=\eps^{-1}(A_{ \eps^{\nu}}^0)=:A^\eps$.  
We again let $\tilde{u}_\eps(t,x,\omega):=\tilde{u}^\eps(\frac t\eps,\frac x\eps,\omega)$, and claim that for some $\tau_0=\tau_0(M, \theta_1,m_1,\alpha_1,\theta,A)>0$ we have
\beq\lb{4.4}
\Gamma_{\tilde{u}_\eps,1-\theta^*}(t-\tau_0\eps,\omega)\subseteq \Gamma_{u_\eps,\theta}\left(t,\omega\right)
\eeq
 for all $t\geq \tau_0\eps$.
Indeed,
let $U:[0,\infty)\to[0,1]$ be a solution to $U'=F_0(U)$ with initial data $U(0)=1-\theta_1$. Since $F_0(u)>0$ for all $u\in [1-\theta_1,1)$, there is $\tau_1=\tau_1(m_1,\alpha_1)>0$ such that $U(\tau_1)\geq 1-\frac{1}{2}\theta^*$.  
It follows from Lemma \ref{L.2.1} with $u_1(t,x):=U(t)$, $u_2:=u^\eps$, and $r:=2\sqrt{Md\,}\tau_1+ \sqrt{d/M\,}\ln\frac{4d}{\theta^*}$
that
\[
u^\eps(\tau_1,\cdot,\omega)\ge U(\tau_1) - 2de^{\sqrt{M/d\,} \left( -r + 2\sqrt{Md\,}\tau_1 \right)}\geq 1-\theta^*
\]
on $ (A^\eps)_{r}^0$ (which is non-empty if $\eps_0>0$ is small enough, depending on $A,\nu$).
Next let
\[
\tau_2:= \tau_1+2c_0^{-1} r' +2c_0^{-1}R_0+\kappa_0,
\]
where $B_{r'}\left((A^\eps)_{r}^0 \right)\supseteq A^\eps$ for all small enough $\eps>0$ (such $r'=r'(A,r)$ exists because  $A$ is convex and hence $\partial A$ is Lipschitz). 
Then $u^\eps(\tau_2,\cdot,\omega)\geq 1-\theta^*$ on $B_{R_0}(A^\eps)$ by Lemma \ref{L.2.2}, so $u^\eps(\tau_2,\cdot,\omega)\geq \tilde{u}^\eps(0,\cdot,\omega)$. Thus for all $(t,\omega)\in [0,\infty)\times\Omega$ we obtain
\[
\Gamma_{\tilde{u}_\eps,1-\theta^*}(t,\omega)\subseteq \Gamma_{u_\eps,1-\theta^*}\left(t+\tau_2\eps,\omega\right).
\]
When $\theta\leq 1-\theta^*$, this immediately yields \eqref{4.4} with $\tau_0:=\tau_2$ .
When $\theta\in (1-\theta^*,1)$, this and Lemma \ref{L.2.8} yield \eqref{4.4} with $\tau_0:=\tau_2+1+D_1(1-\theta)^{1-m_1}$.



Let now $\sigma'$ be from \eqref{6.11}. We  claim that \eqref{6.1''} will follow once we show that there is $\tilde C>0$ such that for all $T_0\geq 1$ and $\eps>0$ small enough (depending on the constants after \eqref{eps} and $A,T_0$) we have
\beq\lb{6.1'}
\bbP\left[(\Theta^{\eps A^\eps,c^*}(t))^0_{ \tilde C T_0 \eps^{\sigma'}}\subseteq \Gamma_{\tilde{u}_\eps,1-\theta^*}(t,\cdot)\text{ for all }t\in [0,T_0]\right]\geq 1- \exp\left(-\eps^{-{\sigma'}}\right).
\eeq
Indeed,  for all small $\eps>0$ we have
\[
\left( \Theta^{A,c^*}(t) \right)^0_{2 \eps^{{\sigma'}}} \subseteq \Theta^{\eps A^\eps,c^*}(t-\tau_0\eps)
\]
for all $t\ge \tau_0\eps$ due to convexity of $A$, \eqref{6.2},  and \eqref{6.11}.  This and \eqref{4.4} now show that if  $(\Theta^{\eps A^\eps,c^*}(t))^0_{ \tilde C T_0 \eps^{\sigma'}}\subseteq \Gamma_{\tilde{u}_\eps,1-\theta^*}(t,\omega)$ for all $t\in[0,T_0]$, then
\[
\left( \Theta^{A,c^*}(t) \right)^0_{ (2+\tilde CT_0)\eps^{{\sigma'}}}\subseteq \Gamma_{{u}_\eps,\theta}(t,\omega)
\]
for all $t\in[\tau_0\eps,T_0]$.
So again, if we make  $\eps_0>0$ smaller yet, depending on the constants mentioned after \eqref{eps} as well as $A$ and $T_0$, then \eqref{6.1'} will indeed imply \eqref{6.1''} with $C_{\theta,A}:=\tau_0$.


So let us now prove \eqref{6.1'}.  In the proof, we will write $u_\eps$ and $A$ in place of $\tilde{u}_\eps$ and $\eps A^\eps$ (so $(u_\eps)_t\geq 0$),  
and  denote
\beq\lb{4.9}
{\sigma''}:=\frac{1-\beta}{2(2-\beta)}\in \left({\sigma'},\frac{1}{6} \right) \qquad\text{and}\qquad r_\eps:=\eps^{{\sigma''}}
\eeq
(recall that $\beta\in (\frac12,1)$).
Let us also pick $\eps_0\in(0,\frac 12)$ such that
\beq\lb{eps'}
\max\left\{2c_0^{-1}\eps_0^{\sigma''-\sigma'},\,D_2(1+\kappa_0+c_1(1+4c_0^{-1}))\eps_0^{\sigma''}\right\}\leq 1
\eeq
(where $\kappa_0$ is from Lemma \ref{L.2.2} and $D_2$  from Corollary \ref{L.2.9}); we will need to further decrease $\eps_0$ later.
For any $u:[0,\infty)\times\bbR^d\times\Omega\to [0,1]$, let us denote  by
\[
T_{u}(x,\omega):= \inf\{t\geq 0\,|\, u(t,x,\omega)\geq 1-\theta^*\}
\]
the arrival time at $x\in\bbR^d$.

Now we fix any $\eps\in(0,\eps_0]$ and ${T}\in [\eps^{{\sigma'}},T_0]$,
and pick $A',c'$ as in Lemma \ref{L.7.3} with $r=r_\eps$   (then ${T}\geq \frac{2r}{c_0}$ by \eqref{eps'}). 
Then let $\Theta^k_{\eps,T}:=\Theta^{A',c'}(kr_\eps^2)$ for each $k\in \bbN$, and  
\[
t_k(\omega):=\inf\left\{t\geq 0\,|\,u_\eps(t,\cdot,\omega)\geq (1-\theta^*)\chi_{\Theta^k_{\eps,T}}\right\}
\]
for each $\omega\in\Omega$.
Note that from Lemma \ref{L.2.2} and Lemma \ref{L.7.3}(ii) we obtain
\beq\lb{6.15}
t_0(\cdot)\leq 2c_0^{-1}r_\eps+\kappa_0\eps.
\eeq
Let $K:= \lceil {T}r_\eps^{-2} \rceil$, so that clearly
$\Theta^{A',c'}({T})\subseteq \Gamma_{u_\eps,1-\theta^*}(t_K(\omega),\omega)$ for all $\omega\in \Omega$.
Our goal is now to prove \eqref{6612} below, which is a high-probability upper bound  on $t_{k+1}(\cdot)-t_k(\cdot)$ for each $k=0,1,\dots,K-1$. 
Adding these will then yield a high-probability upper bound on $t_K(\cdot)$, and therefore also the estimate \eqref{6.87} below, which is very close to \eqref{6.1'} for the single time $T$ instead of all $t\in[0,T_0]$.
We will then upgrade this to \eqref{6.1'}.


Fix any $x_0\in {\Theta^{k+1}_{\eps,T}}\setminus {\Theta^k_{\eps,T}}$ and $\omega\in\Omega$.  Since $\Theta^k_{\eps,T}$ is convex,  there is $x_1\in \partial{\Theta^k_{\eps,T}}$ such that 
$
d(x_0,{\Theta^k_{\eps,T}})=|x_0-x_1|,
$
and $e:=\frac{x_0-x_1}{|x_0-x_1|}$ is an outer normal to $\partial \Theta^k_{\eps,T}$ at $x_1$. Then
\beq\lb{4.8}
d_0:={|x_0-x_1|}\leq c'(e){r_\eps^2}\leq c^*(e){r_\eps^2}
\eeq
by \eqref{6.4} and Lemma \ref{L.7.3}(i).
Since $\Theta^{k}_{\eps,T}$ satisfies the $r_\eps$-interior ball condition by Lemma \ref{L.7.3}(iii), $e$ is the unique outer normal to $\partial \Theta^k_{\eps,T}$ at $x_1$ and 
\[
B_{r_\eps}(x_1-r_\eps e)\subseteq \Theta^k_{\eps,T}
\subseteq\Gamma_{u_\eps,1-\theta^*}(t_k(\omega),\omega).
\] 
So if we let
$
w_{k}^\eps(t,x,\omega):=u_\eps(t_k+\eps t,x_0+\eps x,\omega)
$,
then clearly
\beq\lb{6.9}
B_{\eps^{-1}r_\eps}\left(-\eps^{-1}(d_0+r_\eps)e\right) \subseteq\Gamma_{w_{k}^\eps,1-\theta^*}(0,\omega).
\eeq

Let us now define $d_1:=c_1{{r_\eps^2}}+D_2((1+\kappa_0)\eps+4c_0^{-1}c_1r_\eps^2)$, with $D_2\geq 1$ from Corollary \ref{L.2.9}.  Then $d_1> \max\{r_\eps^2,d_0\}$ by \eqref{4.8}, and 
$d_1< \min\{ r_\eps, C r_\eps^2\}$ for some $C>0$  by $2\sigma''< 1$ and  \eqref{eps'}.  We also let
\[
d_2:=\frac{d_1^2+d_0^2+2d_0r_\eps}{2(d_0+r_\eps)},
\]
so then $d_1-d_2=\frac{(d_1-d_0)(2r_\eps-d_1+d_0)}{2(d_0+r_\eps)}>0$ and $d_2-d_0=\frac{d_1^2-d^2_0}{2(d_0+r_\eps)}>0$. Hence
\beq\lb{6.13}
0\le d_0< d_2< d_1\leq \min\{r_\eps,  Cr_\eps^2\} \qquad\text{ and }\qquad d_2-d_0
 \leq Cr_\eps^{3},
\eeq
with some $C>0$.
We then have
\beq\lb{6.9'}
\left\{x\in\bbR^d \,\Big|\, x\cdot e< -{\eps}^{-1}{d_2} \right\}\cap B_{{\eps}^{-1}d_1}(0) \subseteq B_{{\eps}^{-1}r_\eps}\left(-{\eps}^{-1}(d_0+r_\eps)e\right),
\eeq
which follows from \eqref{6.13} and the fact that the spherical cap on the left has axis $e$ and the radius of its base is $\sqrt{d_1^2-d_2^2}$, which equals $\sqrt{r_\eps^2-(r_\eps+d_0-d_2)^2}$ due to the definition of $d_2$.


Now let
\[
v(\cdot,\cdot,\omega):=u(\cdot,\eps^{-1} x_0+\cdot,\omega;e,\eps^{-1}(x_0-d_2e))
\]
where $u$ is from \eqref{4.1}.
Then $v$ and $w_{k}^\eps$ both satisfy \eqref{1.1} with $f$ shifted in space by $\frac{x_0}\eps$,
and  $\supp \,v(0,\cdot,\omega)\subseteq \calH_{e}^{-}+(R_0- \frac{d_2}\eps) e$.  This, \eqref{6.9}, \eqref{6.9'}, and Lemma \ref{L.2.2} yield
\[
 v(0,\cdot,\omega)\leq {w}_k^\eps (\tau_3,\cdot,\omega)
 \]
 on $B_{\eps^{-1}d_1}(0)$, where  $\tau_3:=\frac{2R_0}{c_0}+ \kappa_0$.
Since $v(0,\cdot,\omega)\geq (1-\theta^*)\chi_{\calH_e^--{\eps^{-1}d_2}e} $, from Lemma \ref{L.2.2}  we also obtain
$
T_{v}(0,\omega)\leq 2(\eps c_0)^{-1}d_2+\kappa_0,
$ so the definition of $d_1$, \eqref{4.8}, and  \eqref{6.13} yield 
\[
\eps^{-1 }d_1\geq D_2\left(1+\kappa_0+{4(\eps c_0)^{-1}c_1{r_\eps^2}}\right)\geq D_2(1+ T_v(0,\omega)),
\]
provided $\eps_0>0$ is small enough (depending on \eqref{const}) so that $d_2\le c^*(e)r_\eps^2+C r_\eps^3 \le 2c_1r_\eps^2$.
So
Corollary \ref{L.2.9} with 
\[
u_1:=v(\cdot,\cdot,\omega),\quad u_2:=w_{k}^\eps(\cdot,\cdot,\omega),\quad t_0:=\tau_3,\quad\text{ and }\quad R:=\eps^{-1}d_1,
\]
yields 
\beq\lb{4.11}
T_{v}(0,\omega)\geq T_{w_{k}^\eps}(0,\omega)-\tau_3-\kappa_0.
\eeq

We next apply both claims in  Proposition {\ref{P.3.1}}, with $\delta:={\sigma'}$ and $l:=\eps^{-1}d_2$ (also recall that $\bar{T}(e)=c^*(e)^{-1}$), to obtain
\beq\lb{4.13}
\bbP\left[\left|T_{v}(0,\cdot)-(\eps c^*(e))^{-1} {d_2} \right|\geq {C}\left(\eps^{-1}d_2 \right)^{\beta+{\sigma'}}+\lambda\right]\leq 2\exp\left(-\bar{C}^{-2}\lambda^2\left(\eps^{-1}d_2\right)^{-2\beta}\right)
\eeq
for some $C>0$ and all $\lambda\geq 0$.
Let us then take $\lambda:=\bar{C} (\eps^{-1}{d_2})^{\beta}\eps^{-{\sigma'}}$.
We get from \eqref{4.9} and \eqref{6.13} that
\beq\lb{4.13'}
(\eps^{-1}d_2)^{\beta+{\sigma'}} \le (\eps^{-1}d_2)^{\beta}\eps^{-{\sigma'}} \leq C\eps^{3{\sigma''}-1}
\eeq
because \eqref{4.9}  yields
\[
3\sigma''+\sigma'-2\sigma''\beta\leq \sigma'' (4-2\beta)\le 1-\beta.
\]
Then \eqref{4.13} and 
$d_2\leq c^*(e) r^2_\eps+Cr_\eps^{3} $
show that with some $C>0$ we have
\beq\lb{6.5}
\bbP\left[T_{v}(0,\cdot)\geq {\eps}^{2{\sigma''}-1}+ {C} {\eps}^{3{\sigma''}-1}\right]\leq 2\exp\left(-{\eps^{-2{\sigma'}}}\right).
\eeq
Hence \eqref{4.11}, $3\sigma''\le 1$, and the definition of $w_{k}^\eps$ yield with some $C>0$,
\beq\label{6.14}
\bbP\left[{T_{u_\eps}}(x_0,\cdot)-t_k(\cdot)\geq \eps^{2{\sigma''}}+ {C} \eps^{3{\sigma''}}\right]\leq 2\exp\left(-\eps^{-2{\sigma'}}\right).
\eeq

In order to upgrade this to \eqref{6612},  let $G_{\eps,T}^k\subseteq \Theta^{k+1}_{\eps,T}\backslash {\Theta^{k}_{\eps,T}}$ be a set containing one point from each cube in $\bbR^d$ with side length $\eps d^{-1/2}$ and all vertices in $\eps d^{-1/2}\bbZ^d$ that has a non-empty intersection with $\Theta^{k+1}_{\eps,T}\backslash {\Theta^{k}_{\eps,T}}$  (recall that $d\le 3$ is  the spatial  dimension). Then clearly $\Theta^{k+1}_{\eps,T}\backslash {\Theta^{k}_{\eps,T}}\subseteq B_\eps(G^k_{\eps,T})$.
If
$
x_0\in G^k_{\eps,T}$,
applying Lemma \ref{L.2.2} to $u^\eps=u_\eps(\eps\cdot,\eps\cdot,\omega)$ yields
\beq\label{6.14'}
T_{u_\eps}(x_0,\omega)\geq \sup_{x\in B_{\eps}(x_0)}T_{u_\eps}(x,\omega)-(2c_0^{-1}+\kappa_0)\eps.
\eeq
This, \eqref{6.14},  and the fact that $|G^k_{\eps,T}|\leq C_AT^{d-1}r^2_\eps\eps^{-d}$ for some $C_A>0$ 
yield with some $C>0$,
\begin{align} \lb{6612}
 \bbP\left[ t_{k+1}(\cdot)-t_k(\cdot)\geq \eps^{2{\sigma''}}+C\eps^{3\sigma''} \right] \notag
&=   \bbP\left[ \sup_{ x\in\Theta^{k+1}_{\eps,T}\backslash \Theta^{k}_{\eps,T}}{T_{u_\eps}}(x,\cdot) - t_k(\omega)\ge \eps^{2{\sigma''}}+C\eps^{3\sigma''} \right] \\
  &\leq 2C_A{T}^{d-1}\eps^{2{\sigma''}-d}\exp\left(-\eps^{-2{\sigma'}}\right).
\end{align}

Next recall that $K= \lceil {T}\eps^{-2\sigma''} \rceil$, and $ T_0\geq\max\{T,1\}$.
Then for $C':=1+2C+2c_0^{-1}+\kappa_0$, with $C$ from \eqref{6612}, we have
\[
K (\eps^{2{\sigma''}}+C\eps^{3\sigma''})+2c_0^{-1}\eps^{{\sigma''}}+\kappa_0\eps\leq 
T+C' T_0 \eps^{{\sigma''}}.
\]
This, \eqref{6612}, and \eqref{6.15}
imply that
\[
\begin{aligned}
    \bbP\left[t_K(\cdot)\geq {T}+C' {T_0} \eps^{{\sigma''}}\right]
    \leq \sum_{k=0}^{K-1}\bbP\left[t_{k+1}(\cdot)-t_k(\cdot)\geq \eps^{2{\sigma''}}+C{\eps}^{3\sigma''}\right]\leq 4{C_A}T_0^{d}\eps^{-d}\exp\left(-\eps^{-2{\sigma'}}\right). 
\end{aligned}
\]
Now \eqref{6.2} and Lemma \ref{L.7.3}(ii) show that
\[
B_{c_1\varepsilon^{\sigma''}/c_0} \left(\Theta^{A,c^*}(T-c_1c_0^{-2}\varepsilon^{\sigma''}) \right) \subseteq \Theta^{A,c^*}(T)\subseteq B_{c_1\varepsilon^{\sigma''}/c_0}\left(\Theta^{A',c'}(T)\right).
\]
Then convexity of $A$ implies $\Theta^{A,c^*}({T}-c_1c_0^{-2}\eps^{{\sigma''}})\subseteq \Theta^{A',c'}({T})$ (note that both these sets are also convex),  so
the definition of $t_K(\omega)$ yields 
\begin{align*}
\bbP \left[ \Theta^{A,c^*}({T}-c_1c_0^{-2}\eps^{{\sigma''}})\not\subseteq  \Gamma_{u_\eps,1-\theta^*}({T}+C' {T_0} \eps^{{\sigma''}},\cdot) \right] 
& \leq \bbP \left[\Theta^{A',c'}({T})\not\subseteq  \Gamma_{u_\eps,1-\theta^*}({T}+C' {T_0} \eps^{{\sigma''}},\cdot) \right]\\
&\leq \bbP\left[t_K(\cdot)\geq {T}+C' {T_0} \eps^{{\sigma''}}\right].
\end{align*}
Therefore
\beq\lb{6.87}
\bbP \left[\Theta^{A,c^*}({T}-c_1c_0^{-2}\eps^{{\sigma''}})\not\subseteq  \Gamma_{u_\eps,1-\theta^*}({T}+C' {T_0} \eps^{{\sigma''}},\cdot) \right]
\leq 4{C_A}T_0^{d}\eps^{-d}\exp\left(-\eps^{-2{\sigma'}}\right).
\eeq

Now let 
\[
T_\eps:=\eps^{{\sigma'}}+C' {T_0} \eps^{{\sigma''}} \qquad\text{and}\qquad C'':=c_1C'+c_1^2c_0^{-2}.
\]
If $t\in [T_\eps,T_0]$, it follows from \eqref{6.87} with $T:=t-C' T_0 \eps^{{\sigma''}} $, and from $(\Theta^{A,c^*}(t))_{c_1s}^0\subseteq \Theta^{A,c^*}(t-s)$ for any $s\in[0,t]$,
that (recall also $T_0\ge 1$, so $C'' T_0 \ge c_1( C' T_0+c_1 c_0^{-2})$)
\beq\lb{6.88}
\bbP\left[ \left(\Theta^{A,c^*}(t)\right)^0_{C'' T_0 \eps^{{\sigma''}}}\not\subseteq  \Gamma_{u_\eps,1-\theta^*}(t,\cdot) \right]\leq 4{C_A}T_0^{d}\eps^{-d}\exp\left(-\eps^{-2{\sigma'}}\right).
\eeq
On the other hand, if $t\in [0, T_\eps]$, then from $c^*\leq c_1$ and $(u_\eps)_t\geq 0$ we obtain 
\beq\lb{6.89}
(\Theta^{A,c^*}(t))^0_{c_1T_\eps}\subseteq A\subseteq \Gamma_{u_\eps,1-\theta^*}(t,\omega).
\eeq

The last two estimates will now yield \eqref{6.1'}.
For any $t\geq s\geq 0$ we clearly have $\Theta^{A,c^*}(s)\subseteq \Theta^{A,c^*}(t)$, and also
$
\Gamma_{u_\eps,1-\theta^*}(s,\cdot)\subseteq \Gamma_{u_\eps,1-\theta^*}(t,\cdot)
$ because $(u_\eps)_t\geq 0$. Then 
\eqref{6.89}   and
\[
\tilde{C}  T_0 \eps^{{\sigma'}}\geq \max\left\{c_1 T_\eps,C'' T_0 \eps^{{\sigma''}}+c_1\eps^{{\sigma'}}\right\},
\]
with $\tilde{C}:=C''+c_1$,
show that
\begin{align*}
   \bbP & \left[\left(\Theta^{A,c^*}(t)\right)^0_{\tilde{C} T_0 \eps^{{\sigma'}}}\not\subseteq  \Gamma_{u_\eps,1-\theta^*}(t,\cdot)\text{ for some }t\in[0,T_0]\right] \\
   &\qquad \le \bbP\left[\left(\Theta^{A,c^*}(t)\right)^0_{C'' T_0 \eps^{{\sigma''}}+c_1\eps^{{\sigma'}}}\not\subseteq  \Gamma_{u_\eps,1-\theta^*}(t,\cdot)\text{ for some }t\in[T_\eps,T_0]\right] \\
&\qquad   \leq  \sum_{  j= \lceil T_\eps\eps^{-\sigma'}\rceil  -1}^{\lceil T_0\eps^{-\sigma'}\rceil -1} \bbP\left[\left(\Theta^{A,c^*}((j+1)\eps^{\sigma'})\right)^0_{C'' T_0 \eps^{{\sigma''}}+c_1\eps^{{\sigma'}}}\not\subseteq  \Gamma_{u_\eps,1-\theta^*}(j\eps^{\sigma'},\cdot) \right].
\end{align*}
Again using
 $(\Theta^{A,c^*}(t))^0_{c_1s}\subseteq \Theta^{A,c^*}(t-s)$  for  $t\geq s\geq 0$, and then 
 \eqref{6.88}, we can continue this estimate via 
\begin{align*}
&  \leq \sum_{  j= \lceil T_\eps\eps^{-\sigma'}\rceil -1 }^{\lceil T_0\eps^{-\sigma'}\rceil -1} 
\bbP\left[\left(\Theta^{A,c^*}(j\eps^{\sigma'})\right)^0_{C'' T_0 \eps^{{\sigma''}}}\not\subseteq  \Gamma_{u_\eps,1-\theta^*}(j\eps^{\sigma'},\cdot) \right]\\
& \leq   4 C_A T_0^{d+1} \eps^{-d-{\sigma'}}\exp\left(-\eps^{-2{\sigma'}}\right).
\end{align*}
Recalling that we wrote $u_\eps$ and $A$ in place of $\tilde{u}_\eps$ and $\eps A^\eps$, this yields \eqref{6.1'} after we let $\eps_0>0$ be small enough (it will then depend on the constants mentioned after \eqref{eps} as well as $A$ and $T_0$).  The proof is thus finished.

\medskip


\medskip







\begin{thebibliography}{10}


\bibitem{AlfGil}
M.~Alfaro and T.~Giletti, 
{\it Asymptotic analysis of a monostable equation in periodic media,}
 Tamkang J. Math. {\bf 47} (2016), no. 1,  1--26. 
 
 
\bibitem{armstrong2015}
{ S.~Armstrong and P.~Cardaliaguet}, 
{\it Stochastic homogenization of
  quasilinear Hamilton-Jacobi equations and geometric motions}, J. Eur. Math. Soc. (JEMS) {\bf 20} (2018), no. 4,  797--864.


\bibitem{aronson1975}
D.~J. Aronson and H.~F. Weinberger, {\it Nonlinear diffusion in population genetics, combustion, and nerve pulse propagation},
Partial Differential Equations and Related Topics, Lecture Notes in Mathematics
{\bf 446}, 5--49, Springer Verlag, 1975.



\bibitem{aronson1978}
D.~J. Aronson and H.~F. Weinberger,  
{\it Multidimensional nonlinear diffusion arising in population genetics},
{Advances in Math.}
  {\bf 30} ({1978}),
  no. {1},
{33--76}.


   

\bibitem{Berrev} 
H.~Berestycki, 
\it The influence of advection on the propagation of fronts in reaction-diffusion equations, 
\rm Nonlinear PDEs in Condensed Matter and Reactive Flows, NATO Science Series C, 569, H. Berestycki and Y. Pomeau eds, Kluwer, Doordrecht, 2003.






 
\bibitem{delfour1994shape}
M.~C. Delfour and J.-P. Zol{\'e}sio,
{\it Shape analysis via oriented distance functions},
J. Funct. Anal.
{\bf 123} (1994),
  no. 1, {129--201}.
  
\bibitem{Fisher}
R.~Fisher, 
{\it The wave of advance of advantageous genes,}
Ann. Eugenics {\bf 7} (1937), no. 4,  355--369.




\bibitem{KPP} 
A. N.~Kolmogorov, I. G.~Petrovskii, and N.S.~Piskunov, 
\it \'Etude de l'\'equation de la diffusion avec croissance de la quantit\'e de mati\`ere et son application \`a un probl\`eme biologique, 
\rm Bull. Moskov. Gos. Univ. Mat. Mekh. {\bf 1} (1937), 1--25.

\bibitem{kosy}
E. Kosygina, F. Rezakhanlou and S. R. S. Varadhan,
{\it Stochastic homogenization of Hamilton-Jacobi-Bellman equations},
 Comm. Pure Appl. Math. {\bf 59} (2006) no. 10, 1489--1521.



\bibitem{LinZla}
J.~Lin and A.~Zlato{\v{s}}, 
{\it Stochastic homogenization for
reaction--diffusion equations}, 
Arch.~Ration.~Mech.~Anal.
  {\bf 232} (2019), no. 2,   813--871.
  
  
\bibitem{LPV}
P.-L.~Lions, G.~Papanicolaou, and S.~R.~S. Varadhan.
{\it Homogeneization of Hamilton--Jacobi equations}, unpublished preprint, 1986.

\bibitem{LioSouPer}
P.-L.~Lions and P. E.~Souganidis,
{\it Homogenization of degenerate second-order PDE in periodic and almost periodic environments and applications},
 Ann. Inst. H. Poincar{\'e} Anal. Non Lin{\'e}aire
 {\bf 22}  (2005), no. 5, 667--677.
 
\bibitem{LioSou} P.-L.~Lions and P. E.~Souganidis, 
{\it Homogenization of ``viscous'' Hamilton-Jacobi equations in stationary ergodic media},
\rm  Comm.~Partial Differential Equations  {\bf 30}  (2005), no. 3, 335--375.

 



\bibitem{MajSou}
{ A.~J. Majda and P.~E. Souganidis}, {\it Large scale front dynamics for
  turbulent reaction-diffusion equations with separated velocity scales},
  Nonlinearity {\bf 7} (1994), no. 1,   1--30.
  



\bibitem{RezTar}
F.  Rezakhanlou and J. E.  Tarver,
{\it Homogenization for stochastic Hamilton-Jacobi equations},
Arch.~Ration.~Mech.~Anal. {\bf 151} (2000), no. 4,   277--309. 


\bibitem{47souganidis}
{ P.~E. Souganidis}, {\it Front propagation: theory and applications}, in
  Viscosity solutions and applications, Springer, Berlin, 1997,  186--242.
  
  
\bibitem{sou1999}
P. E.~Souganidis,
{\it Stochastic homogenization of Hamilton--Jacobi equations and some applications},
  Asymptot. Anal.  {\bf 20} (1999), no. 1, 1--11.
  


   
   
\bibitem{xin2000}
{ J.~Xin}, {\it Front propagation in heterogeneous media},
SIAM Rev. {\bf 42} (2000), no. 2, 161--230.

   
\bibitem{ZhaZla}
Y.~P. Zhang and A.~Zlato{\v{s}},
{\it Long time dynamics for combustion in random media}, preprint, 
arXiv:2008.02391.
   

  

\bibitem{ZlaBist}
A.~Zlato{\v{s}}, 
{\it Existence and non-existence of transition fronts
  for bistable and ignition reactions}, Ann.~Inst.~H.~
  Poincar{\'e} Anal.~Non Lin{\'e}aire {\bf 34} (2017), no. 7,
   1687--1705.

\bibitem{ZlaInhomog}
A.~Zlato{\v{s}}, 
{\it Propagation of  reactions in inhomogeneous media}, Comm.~Pure Appl.~Math. {\bf 70} (2017), no. 5, 884--949.
  
  


\end{thebibliography}

\end{document}